\newtheorem{definition}{Definition}[section]
\newtheorem{theorem}{Theorem}[section]
\newtheorem{lemma}{Lemma}[section]
\newtheorem{prop}{Proposition}[section]
\newtheorem{rem}{Remark}[section]
\newcommand{\R}{\mathds{R}}
\newcommand{\E}{\mathds{E}}
\newcommand{\PP}{\mathds{P}}
\newcommand{\dint}{\mathrm{d}}
\begin{document}

\title{The Stochastic Geometry of Unconstrained One-bit Data Compression}
\author{Fran\c{c}ois Baccelli and Eliza O'Reilly}
%\date{} % Activate to display a given date or no date (if empty),
         % otherwise the current date is printed 

\address{University of Texas at Austin, Department of Mathematics, RLM 8.100,
2515 Speedway Stop C1200
Austin, Texas 78712-1202}
\curraddr{}
\email{baccelli,eoreilly@math.utexas.edu}
\thanks{The first and second author were supported by a grant of the Simons Foundation (\#197982 to UT Austin) and the second author was supported by the National Science Foundation Graduate Research Fellowship under Grant No. DGE-1110007.}
%\setlength{\abovetopsep}{10pt}

%\maketitle

\begin{abstract}  
A stationary stochastic geometric model is proposed for analyzing the data compression method used
in one-bit compressed sensing. The data set is an unconstrained stationary set, for instance
all of $\R^n$ or a stationary Poisson point process in $\R^n$. It is compressed using a stationary
and isotropic Poisson hyperplane tessellation, assumed independent of the data.
That is, each data point is compressed using one bit with respect to each hyperplane, which is the side
of the hyperplane it lies on. This model allows one to determine how the intensity of the hyperplanes
must scale with the dimension $n$ to ensure sufficient separation of different data by the hyperplanes
as well as sufficient proximity of the data compressed together.
The results have direct implications in compressive sensing and in source coding.
\end{abstract}
\maketitle
%\tableofcontents
%%%%%%%%%%%%%%%%%%%%%%%%%%%%%%%%%%%%%%

\section{Introduction and Motivations}

%%%%%%%%%%%%%%%%%%%%%%%%%%%%%%%%%%%%%%

One-bit compressed sensing is a method of signal recovery from a sequence of measurements contained in $\{-1, 1\}$. More specifically, one aims to recover the signal $x \in \R^n$ from measurements of the form
\begin{align*}
y_i = \mathrm{sign}( \langle u_i, x \rangle - t_i), 
\end{align*} 
where the $u_i$ are independent vectors in $\mathds{R}^{n}$ and $t_i$ random displacements in $\R$. One can interpret this problem geometrically, by the fact that each pair $(u_i, t_i)$ defines a unique affine hyperplane in $\R^n$ with normal vector $u_i$ at distance $t_i$ from the origin. The measurement $y_i \in \{-1, 1\}$ then indicates which side of the hyperplane the signal $x$ lies on. This collection of hyperplanes tessellates the space of signals into convex cells. Two signals contained in the same cell will have the same set of one-bit measurements $\{y_i\}$. The quality of this compression can be measured in a few different ways. For instance, one can measure how likely it is that two different signals are compressed differently, i.e., lie in different cells of the tessellation. As in one-bit compressed sensing, the quality can also be determined by having a small error in signal recovery, which can be guaranteed if the collection of hyperplanes tessellate the signal space into cells small enough to ensure all signals within a single cell are close in Euclidean distance.

Previous work (\cite{Bilyk}, \cite{Ward}, \cite{PV2}) has examined this problem when it is known that the signal lies in some bounded set $K \subset \R^n$. In this paper, we consider the data set to be either all of $\R^n$ or an uncountable discrete subset of $\R^n$ modeled with a stationary Poisson point process. The assumption that the data is Poisson provides a worse-case scenario, since any dependence between the underlying points increases one's ability to compress the data in such a way that the signals can be recovered with small error. The set of random hyperplanes used to obtain the one-bit measurements is given by a stationary and isotropic Poisson hyperplane process. The reasons for this choice are discussed at the end of the paper (see Subsection \ref{sec:IPH}), the key reason being that it leads to the least volume of data compressed with a typical data point among a wide collection of hyperplane models.

As already explained, the aim is to find the minimum intensity of the hyperplane process at some scaling with the space dimension $n$ such that different data will be separated by hyperplanes with high probability, and also for data compressed in the same way to be close with high probability. Under the assumption of stationarity, we can ask for, in some sense, a ``typical" instance to satisfy the desired property. To address the ``typicality", there are two viewpoints to take. One is from the view of a typical data point, and in the stationary regime, we can consider its location to be at the origin. The cell of the tessellation that the typical signal is contained in is then the so-called zero cell \cite{Stoyan}, also referred to as the Crofton cell. The other viewpoint is to ask that a typical cell satisfy some property, e.g., to have small diameter.
The typical cell of a stationary Poisson hyperplane tessellation can be interpreted as the distribution of the cell obtained when taking a large ball
centered at the origin, and picking a cell intersecting that ball uniformly at random.
The zero cell is larger in mean than the typical cell, as there is bias towards larger cells
when asking that it contain the origin. The viewpoint of a typical signal and its cell,
the zero cell, seems a more natural viewpoint to take here, and will be the main focus of this paper, although some results are also derived on the typical cell for comparison.

To summarize the results, consider a sequence of compressions indexed by dimension, i.e., for each $n$, let $X_n$ be a stationary and isotropic Poisson hyperplane tessellation in $\R^n$ with intensity $\gamma_n$ that is used to compress the underlying data. We let $\gamma_n \sim \rho n^{\alpha}$ as $n \to \infty$ and discuss the values of $\alpha$ for which a good separation or low distortion of the data can be achieved with high probability by the hyperplanes when $n$ is large. Several criteria of good separation and low distortion are discussed.
By good separation, we mean a property that connects differences between data and differences between their encodings. By low distortion, we mean a property than connects closeness of data and similarity of their encodings. The results on the matter are summarized below when data are the whole of ${\R}^n$.

The first separation criterion discussed is that the distance to the nearest
data that is compressed differently from the typical data (i.e.,
the closest point of the Euclidean space which is not in the zero cell) be small.
It is shown that as long as $\alpha > 0$, this distance tends to zero in distribution as $n$ tends to infinity. 

The second separation criterion considered is that some transformation of the typical signal
is compressed differently than the typical signal with high probability. We discuss two types of transformations:
(i) a Gaussian displacement with fixed variance $\sigma$ per dimension (which is the least demanding of the criteria discussed here),
and (ii) a displacement at a fixed distance $\sigma$ away and in a random direction.
%(which is more demanding than the two other criteria).
For case (i), we show that, for $\alpha = 0$, the typical signal is compressed 
in the same way as the typical signal with a probability decreasing exponentially with $\rho$.
We also show that the same holds in case (ii) provided $\alpha = \frac{1}{2}$. %}

%The other properties of interest pertain to what will be called distortion below.
%By low distortion, we mean a property than connects closeness of data and similarity
%of their encodings.

The first low distortion criterion is the requirement that the volume of other data compressed with a typical data be small. The hyperplane intensities discussed above are not large enough for this to hold. While data in most directions will be separated from the typical data, 
there is a set of directions of decreasing measure in which the compression will remain identical, and in high dimension, this is where most of the volume of data compressed like the typical signal lies.
Considering this low distortion criterion, % to be the fact that the volume of data compressed like the typical signal be small.
we show that, for $\alpha = 1$, there is a threshold for $\rho$ above which the
expected value of the volume in question goes to zero and below which it approaches infinity.

A small volume still does not ensure that all data compressed together is close in Euclidean distance. This motivates the discussion of a second low distortion criterion. In the case where data is the whole Euclidean space, the requirement is that the point which is the farthest away from the typical data and encoded in the same way be within some distance $R$.
%When the underlying data is a stationary Poisson point process, the requirement is that the probability that the other data points compressed like the typical data be all within some distance. For both cases, 
It is shown that if we increase $\alpha$ to $\frac{3}{2}$, then there exists a value for $\rho$ above which this probability approaches one as dimension $n$ tends to infinity. A similar criterion for the case when the data is modeled with a Poisson point process is also discussed.

Some of these scalings can be significantly decreased if it is known that the data are 'sparse', namely
lie within a lower dimensional subspace of $\R^n$. In Section \ref{s:reduction},
we show how this affects the intensity of hyperplanes needed for the above low distortion criteria.

The results have several implications in compressed sensing and in source coding.
These are discussed in Subsections \ref{s:commentCS} and \ref{s:commentIT} at the end of the paper.

\section{Preliminaries and Notation}

First we define the notation for the classical objects used in the present paper.
Let $B_n(r)$ denote the ball or radius $r$ centered at the origin in $\R^n$. The usual $\ell^2$ norm of a vector is denoted by $|\cdot|$, and the $n$-dimensional volume of a set $K \subset \R^n$ by $V_n(K)$.
The volume of the $n-$dimensional unit ball $B_n(1)$ is denoted by $\kappa_n$ and the surface area of the $n$-dimensional unit sphere $S^{n-1}$ is denoted by $\omega_n$. They satisfy
\[\kappa_n = \frac{\pi^{\frac{n}{2}}}{\Gamma( \frac{n}{2} + 1)}, \qquad \omega_n = n \kappa_n = \frac{2\pi^{\frac{n}{2}}}{\Gamma(\frac{n}{2})}.\]
Also recall the following special functions. The gamma function is defined as
\[\Gamma(x) := \int_0^{\infty} t^{x-1}e^{-t} dt,\]
and the upper and lower regularized incomplete gamma functions are defined for all $R \geq 0$ by
\[\Gamma_u(x, R) := \frac{\int_R^{\infty} t^{x-1}e^{-t} dt}{\Gamma(x)}, \qquad \Gamma_{\ell}(x, R) := \frac{\int_0^{R} t^{x-1}e^{-t} dt}{\Gamma(x)},\]
respectively. Stirling's formula %and Laplace's method 
gives the following asymptotic expansion as $x \rightarrow \infty$:
\begin{align}\label{e:gamma}
\Gamma(x + 1) &\sim \sqrt{2\pi x}\left(\frac{x}{e}\right)^x. 
\end{align}
%\begin{align}\label{e:gammau}
%\Gamma_u(x, xc) \sim \begin{cases} \frac{e^{x (\log c - c+1)}}{\sqrt{2\pi x}(c -1)}, & c > 1 \\
%\frac{1}{2} ,  & c = 1 \\ 1, & 0 < c < 1, \end{cases}
%\end{align}
%and 
%\begin{align}\label{e:gammal}
%\Gamma_{\ell}(x, xc) \sim \begin{cases} \frac{e^{x (\log c - c+1)}}{\sqrt{2\pi x}(1- c)}, & 0 < c < 1 \\
%\frac{1}{2} ,  & c = 1 \\ 1, & c > 1. \end{cases}
%\end{align}
The following asymptotic formulas will be used throughout: by \eqref{e:gamma}, as $n \rightarrow \infty$,
\begin{align}\label{e:asymptotics}
\kappa_n \sim \frac{1}{\sqrt{n\pi}}\left(\frac{2\pi e}{n}\right)^{n/2}\qquad \text{and} \qquad \frac{\kappa_{n-1}}{n\kappa_n} &\sim \frac{1}{\sqrt{2\pi n}}.
\end{align}

Denote by $\mathcal{F}, \mathcal{C}$ the sets of closed and convex subsets of $\R^n$, respectively. For $A \subset \R^n$, define
\begin{align}
\mathcal{F}^A := \{ F \in \mathcal{F} : F \cap A = \emptyset\}
\text{ and }
\mathcal{F}_A := \{F \in \mathcal{F} : F \cap A \neq \emptyset\}.
\end{align}
The $\sigma$-algebra $\mathcal{B}(\mathcal{F})$ of Borel sets of $\mathcal{F}$ is generated by either of the systems $\{\mathcal{F}_C : C \in \mathcal{C}\}$ and $\{\mathcal{F}^C : C \in \mathcal{C}\}$ (see Lemma 2.1.1 in \cite{weil}). Denote the set of $n-1$ dimensional hyperplanes in $\R^n$ by $\mathcal{H}^n$ and the Grassmanian of $n-1$-dimensional linear subspaces of $\R^n$ by $G(n, n-1)$. The set $G(n, n-1)$ is the subset of hyperplanes in $\mathcal{H}^n$ that pass through the origin. 

%%%%%%%%%%%%%%%%%%%%%%%%%%%%

\subsection{Poisson Hyperplane Tessellations}

%%%%%%%%%%%%%%%%%%%%%%%%%%%%

A hyperplane process $X$ in $\R^n$ is a random counting measure on the space $\mathcal{H}^n$.
The process $X$ is stationary if its distribution is invariant under translations and it is
isotropic if its distribution is invariant under rotations about the origin. 

The intensity measure of $X$ is defined as $\Theta(\cdot) := \E[X(\cdot)]$.
The following theorem (see, e.g., \cite{weil}) provides a decomposition for the intensity measure
for all stationary hyperplane processes. Note that elements of the space $\mathcal{H}^n$
are of the form  
\begin{equation}
H(u,\tau) := \{x \in \R^n : \langle x, u \rangle = \tau\},
\end{equation}
where $u \in \R^n$ and $\tau \in \R$.

\begin{theorem}
Let $X$ be a stationary hyperplane process in $\R^n$ with intensity measure $\Theta \neq 0$. Then, there is a unique number $\gamma \in (0, \infty)$ and probability measure $\mathds{Q}$ on $G(n, n-1)$ such that for all nonnegative measurable functions $f$ on $\mathcal{H}^n$,
\[\int_{\mathcal{H}^n} f d\Theta = 2 \gamma \int_{S^{n-1}} \int_{0}^{\infty} f(H(u, \tau))d\tau \phi(du),\] 
where for $A \in \mathcal{B}(S^{n-1})$, $\phi(A) := \frac{1}{2} \mathds{Q}(\{u^{\perp} : u \in A\})$. $\phi$ is called the spherical directional distribution. In particular, for $A \in \mathcal{B}(\mathcal{H}^n)$,
\[\Theta(A) = 2\gamma \int_{S^{n-1}} \int_{0}^{\infty} 1_{\{H(u,\tau) \in A\}} d\tau \phi(du).\]
\end{theorem}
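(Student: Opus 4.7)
The plan is to exploit the translation-invariance of the intensity measure $\Theta$ to produce a disintegration over the directional Grassmannian $G(n, n-1)$. The key parametrization is the map $\pi\colon S^{n-1} \times [0, \infty) \to \mathcal{H}^n$ sending $(u, \tau) \mapsto H(u, \tau)$, which is two-to-one modulo the null slice $\{\tau = 0\}$ via $(u, 0) \sim (-u, 0)$. Correspondingly, the assignment $u \mapsto u^{\perp}$ is a two-to-one covering $S^{n-1} \to G(n, n-1)$ whose fibers identify hyperplanes sharing a direction. I would do the analysis upstairs on $G(n, n-1) \times \R$ first, and only at the end convert to the sphere parametrization, so the factor $\tfrac{1}{2}$ in the definition of $\phi$ has a transparent origin.

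First I would argue that $\Theta$ is, fiberwise over $G(n, n-1)$, a translation-invariant measure in the signed-distance coordinate. Translation by $x \in \R^n$ acts on $\mathcal{H}^n$ by $H(u, \tau) \mapsto H(u, \tau + \langle x, u \rangle)$; this preserves the directional projection $H \mapsto u^{\perp}$ and acts by a shift $\tau \mapsto \tau + \langle x, u \rangle$ on each fiber. Disintegrating $\Theta$ as $\Theta = \int_{G(n, n-1)} \Theta_L \, d\nu(L)$, where $\nu$ is the image of $\Theta$ under the projection and each $\Theta_L$ lives on the line of hyperplanes parallel to $L$, translation-invariance forces each $\Theta_L$ to be a constant multiple $c(L)$ of one-dimensional Lebesgue measure in the signed-distance parameter. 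Standard disintegration in Polish spaces makes $c$ a measurable, nonnegative function on $G(n, n-1)$.

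Next I would normalize. Setting $2\gamma := \int_{G(n, n-1)} c(L) \, d\nu(L)$, finiteness follows from local finiteness of $\Theta$ applied to $\mathcal{F}_{B_n(1)}$ (the Lebesgue length of $\{\tau : H(u, \tau) \cap B_n(1) \neq \emptyset\}$ equals $2$ on every fiber), and strict positivity follows from $\Theta \neq 0$. Defining $\mathds{Q}$ on $G(n, n-1)$ as the normalized measure $(2\gamma)^{-1} c \, d\nu$ produces a probability measure. Pushing $\mathds{Q}$ back up via the antipodal double cover $u \mapsto u^{\perp}$ and halving yields the spherical directional distribution $\phi(A) = \tfrac{1}{2} \mathds{Q}(\{u^{\perp} : u \in A\})$; rewriting the integral formula in the $(u, \tau)$ coordinates with $\tau \geq 0$ accounts for the prefactor of $2\gamma$ rather than $\gamma$.

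The main step is bookkeeping rather than a genuine obstacle: one must line up the two-to-one cover $S^{n-1} \to G(n, n-1)$ with the reduction to $\tau \geq 0$ in the signed-distance coordinate so that each unordered pair $\{u, -u\}$ contributes the correct total mass. Uniqueness of $\gamma$ and $\mathds{Q}$ is then automatic: the disintegration $\nu$ and the Lebesgue multiplier $c$ are each unique up to $\Theta$-null sets, so $2\gamma$ is the total mass $\int c \, d\nu$, and $\mathds{Q}$ is its normalization.
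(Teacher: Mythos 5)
The paper offers no proof of this statement: it is quoted verbatim from the literature (the translation-invariant decomposition of the intensity measure of a stationary hyperplane process, Theorem 4.4.2 in \cite{weil}), so there is nothing internal to compare against. Your argument --- disintegrate $\Theta$ over the direction map $H \mapsto u^{\perp}$, use stationarity to force each conditional measure to be a multiple $c(L)$ of Lebesgue measure in the signed-distance coordinate, then normalize and lift through the double cover $S^{n-1} \to G(n,n-1)$ --- is precisely the standard proof of that quoted result, and the architecture is sound.

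Two points need repair. First, you cannot take $\nu$ to be ``the image of $\Theta$ under the projection'': since each fiber with $c(L)>0$ carries infinite mass, that pushforward takes only the values $0$ and $\infty$ and is not a legitimate mixing measure. Disintegrate instead against the finite measure $A \mapsto \Theta\left(\mathcal{F}_{B_n(1)} \cap \pi^{-1}(A)\right)$ on $G(n,n-1)$ (finite by local finiteness of $\Theta$, which is the implicit standing hypothesis). Relatedly, stationarity only gives, for each fixed $x$, that $\nu$-almost every $\Theta_L$ is invariant under the single shift $\tau \mapsto \tau + \langle x, u_L\rangle$; to conclude $\Theta_L = c(L)\,\mathrm{Leb}$ you should run this over a countable dense set of $x$ and invoke that a locally finite measure on $\R$ invariant under a dense subgroup of translations is a multiple of Lebesgue. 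Second, your normalization is off by a factor of two. With your own observation that each fiber contributes length $2$ to $\mathcal{F}_{B_n(1)}$, the choice $2\gamma := \int c\, d\nu$ together with $\mathds{Q} = (2\gamma)^{-1} c\, d\nu$ and $\phi = \tfrac{1}{2}(\text{lift of } \mathds{Q})$ makes the right-hand side of the displayed identity equal to $\tfrac{1}{2}\int f\, d\Theta$, not $\int f\, d\Theta$. The correct normalization is $\gamma := \int c\, d\nu$, which also matches the identity $\Theta(\mathcal{F}_{B_n(r)}) = 2\gamma r$ that the paper uses in Proposition \ref{prop:3.1}. Both fixes are routine, but since you identify the bookkeeping of the double cover as the main content of the proof, it should be carried out exactly.
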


The parameter $\gamma$ is called the intensity and $\mathds{Q}$ the directional distribution of $X$. If $X$ is isotropic, then $\mathds{Q}$ is rotationally invariant and thus is the Haar measure $\nu_{n-1}$ and $\phi = \sigma$, the normalized spherical Lebesgue measure on $S^{n-1}$. 
%There is a simple interpretation of $\gamma$ and $\mathds{Q}$ as follows. If $\pi_0$ maps every hyperplane to its translate through the origin,
%\begin{align*}
%\gamma = \frac{1}{2}\E[X(\mathcal{F}_{B_n})] \text{ and } \mathds{Q}(A) = \frac{\E[X(\mathcal{F}_{B_n} \cap \pi_0^{-1}(A))]}{\E[X(\mathcal{F}_{B_n})]}.
%\end{align*}

The hyperplane process $X$ with intensity measure $\Theta$ is Poisson if for all disjoint $A_1, ..., A_k \in \mathcal{B}(\mathcal{H}^n)$ such that $\Theta(A_i) < \infty$ for all $i$,
\begin{align*} 
\mathds{P}(X(A_1) = m_1, ..., X(A_k) = m_k) = \prod_{i=1}^k\frac{\Theta(A)^{m_i}}{m_i!} e^{- \Theta(A)}.
\end{align*}

%%%%%%%%%%%%%%%%%%%%%%%%%%%%%%%%%%

\subsection{Zero cell}\label{s:zero}

%%%%%%%%%%%%%%%%%%%%%%%%%%%%%%%%%%

%The tessellation induced by 
A hyperplane process $X$ in $\R^n$ induces a random tessellation of $\R^n$.%is called the Crofton tessellation \cite{weil}. 
The zero cell, or Crofton cell, of this tessellation, denoted $Z_0$, 
is the cell of this tessellation containing the origin.

The following result (see Theorem 10.4.9 in \cite{weil}) states that for stationary Poisson hyperplane processes, isotropic hyperplanes minimize the expected area of the zero cell over all spherical distributions. This result helps to justify considering the class of isotropic Poisson hyperplanes to tessellate the space, since cells of smaller volume may lead to a more efficient compression.

\begin{theorem}
Let $X$ be a nondegenerate stationary Poisson hyperplane process in $\R^n$ of intensity $\gamma$, and let $Z_0$ be the zero cell of the induced hyperplane tessellation. Then,
\begin{align*}
\mathds{E}V_n(Z_0) \geq n!\kappa_n \left( \frac{n \kappa_n}{2\gamma\kappa_{n-1}} \right)^{n},
\end{align*}
with equality if and only if $X$ is isotropic. 
\end{theorem}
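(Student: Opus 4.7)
The plan is to reduce $\E V_n(Z_0)$ to a single integral over the sphere whose integrand depends on the directional distribution $\phi$ only through a one-homogeneous function, and then to invoke Jensen's inequality. I would begin by writing $\E V_n(Z_0)=\int_{\R^n}\PP(x\in Z_0)\,dx$ via Fubini. For a Poisson hyperplane process, $\PP(x\in Z_0)=\exp\bigl(-\Theta(S_{0,x})\bigr)$, where $S_{0,x}$ is the set of hyperplanes separating $0$ from $x$. Using the decomposition of $\Theta$ from the previous theorem, the observation that $H(u,\tau)$ separates $0$ and $x$ exactly when $0<\tau<\langle x,u\rangle$, and the evenness of $\phi$ inherited from the map $u\mapsto u^\perp$, I obtain
\[
\Theta(S_{0,x})=\gamma\, h(x),\qquad h(x):=\int_{S^{n-1}}|\langle x,u\rangle|\,\phi(du).
\]

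Since $h$ is positively $1$-homogeneous, passing to polar coordinates $x=ru$ gives
\[
\E V_n(Z_0)=\int_{S^{n-1}}du\int_0^\infty e^{-\gamma r h(u)}r^{n-1}\,dr=\frac{(n-1)!}{\gamma^n}\int_{S^{n-1}}h(u)^{-n}\,du.
\]
The crucial observation is that the spherical mean of $h$ itself does \emph{not} depend on $\phi$: by Fubini,
\[
\int_{S^{n-1}}h(u)\,du=\int_{S^{n-1}}\phi(dv)\int_{S^{n-1}}|\langle u,v\rangle|\,du=\omega_n\cdot\frac{2\kappa_{n-1}}{n\kappa_n},
\]
since the inner integral is rotation invariant in $v$ (a standard spherical-coordinate computation evaluates it to $\omega_n\cdot 2\kappa_{n-1}/(n\kappa_n)$) and $\phi$ is a probability measure.

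I would then apply Jensen's inequality with the strictly convex function $t\mapsto t^{-n}$ against the uniform probability measure $\omega_n^{-1}du$ on $S^{n-1}$:
\[
\frac{1}{\omega_n}\int_{S^{n-1}}h(u)^{-n}\,du\geq\left(\frac{1}{\omega_n}\int_{S^{n-1}}h(u)\,du\right)^{-n}=\left(\frac{2\kappa_{n-1}}{n\kappa_n}\right)^{-n}.
\]
Substituting back produces exactly the claimed lower bound $n!\kappa_n\bigl(n\kappa_n/(2\gamma\kappa_{n-1})\bigr)^n$.

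For the equality case, Jensen is saturated iff $h$ is $du$-a.e.\ constant on $S^{n-1}$, and by continuity of support functions of bounded convex sets $h$ must then be constant on all of $S^{n-1}$. The delicate step—which I expect to be the main obstacle—is to deduce from $h\equiv\text{const}$ that $\phi=\sigma$, i.e.\ that $X$ is isotropic. This follows from the injectivity of the cosine transform on even signed measures on $S^{n-1}$ (equivalently, from uniqueness of the generating measure of a centrally symmetric zonoid, which here is forced to be a Euclidean ball), a classical fact in integral geometry. One must also take some care in the reduction step to handle the one-sided parametrization $\tau\geq 0$ together with the identification $u\sim-u$ implicit in the definition of $\phi$.
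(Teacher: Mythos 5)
Your argument is correct, but note that the paper does not prove this statement at all: it is imported verbatim as Theorem 10.4.9 of the cited reference \cite{weil}, so there is no in-paper proof to compare against. What you wrote is essentially the standard proof of that result: the identity $\mathds{E}V_n(Z_0)=\frac{(n-1)!}{\gamma^n}\int_{S^{n-1}}h(u)^{-n}\,du$ is (up to normalization) the polar-body volume formula for the zonoid generated by $\phi$, the $\phi$-independence of $\int_{S^{n-1}}h\,du$ is correct since $\int_{S^{n-1}}|\langle u,v\rangle|\,du=2\kappa_{n-1}$ for every $v$, Jensen with the strictly convex map $t\mapsto t^{-n}$ gives the bound with the right constant $n!\kappa_n\bigl(n\kappa_n/(2\gamma\kappa_{n-1})\bigr)^n$, and the equality case does indeed rest on injectivity of the cosine transform on even measures (evenness of $\phi$ being automatic from $\phi(A)=\tfrac12\mathds{Q}(\{u^\perp:u\in A\})$). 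The only point worth making explicit is where nondegeneracy enters: it guarantees $h(u)>0$ for every $u\in S^{n-1}$ (otherwise $\phi$ would be supported on a great subsphere), which is what legitimizes the radial integration and keeps $h^{-n}$ finite.
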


As mentioned in the introduction, a small volume is not sufficient
to ensure that two data points that have the same compression are close together.
This requires the cell the points are contained in to have small diameter, but this is a difficult quantity to study.
A related quantity is the radius of the smallest ball centered at the origin that contains the cell $\mathcal{C}$, i.e., the quantity
\begin{align*}
R_M(\mathcal{C}) = \inf\{r > 0 : \mathcal{C} \subset B(r) \}.
\end{align*}
The distribution of $R_M(Z_0)$ is described in \cite{calka}.
It is based on the observation that if $R_M \geq r$, then the sphere of radius $r$
centered at the origin will not be covered by the random arcs generated by the hyperplanes
that compose the faces of $Z_0$, i.e., $rS^{n-1} \cap \mathrm{int}(Z_0) \neq \emptyset$.
Since the directional distribution of $X$ is just the Haar measure on $S^{n-1}$,
the probability that $R_M \geq r$ is the probability that
$S^{n-1}$ can be covered by a Poisson number $N$ of independent spherical caps,
with angular radii divided by $\pi$ distributed as
%\begin{align*}
$d\nu(\theta) = \pi \sin(\pi \theta) 1_{[0, 1/2]}(\theta) d\theta$.
%\end{align*}
Unfortunately, no explicit formula for this probability is known beyond dimension two.

%%%%%%%%%%%%%%%%%%%%%%%%%%%%%%%%%%%

\subsection{Typical cell}

%%%%%%%%%%%%%%%%%%%%%%%%%%%%%%%%%%%

Since larger cells are more likely to contain the origin, the zero cell is not a good measure of the average or ``typical" cell. We can instead consider a large compact set and pick a cell uniformly at random and translate it is some appropriate way so that it contains the origin. This more accurately represents the average distribution of the cells induced by the hyperplane process. Formally, we define the typical cell as follows. Let $c: \mathcal{C}^{'} \rightarrow \R^n$ be a {\em center function}, that is, a measurable map which is compatible with translations, i.e., $c(C + x) = c(C) + x$ for all $x \in \R^n$. For a hyperplane process $X$, let $\hat{X}$ denote the induced random mosaic, that is, the collection of cells of the induced tessellation.
\begin{definition}
The typical cell $Z$ of a hyperplane process $X$ 
is the random polytope with distribution
\begin{align*}
\mathds{Q}_0(\mathcal{A}) = \frac{1}{\lambda|B|} \mathds{E} \sum_{P \in \hat{X}} 1_{\mathcal{A}}\{P - c(P)\}1_{B}(c(P)),
\end{align*}
where $B \in \mathcal{B}(\R^n)$ is an arbitrary bounded Borel set, and $\lambda$ is the cell intensity of $\hat{X}$. Also, this distribution has the ergodic interpretation
\begin{align*}
\mathds{Q}_0(\mathcal{A})  = \lim_{r \rightarrow \infty} \frac{1_{\mathcal{A}}\{P - c(P)\}1_{r [-1/2, 1/2]^n}(c(P))}{\sum_{P \in \hat{X}} 1_{r[-1/2, 1/2]}(c(P))}, \qquad a.s.
\end{align*}
\end{definition}

The cell intensity $\lambda$ of the induced random mosaic $\hat{X}$ of a hyperplane process $X$ in $\R^n$ is related to the intensity $\gamma$ of $X$ in the following way:
\begin{equation}\label{e: cell_int}
 \lambda = \kappa_n\left( \frac{ \gamma\kappa_{n-1}}{n \kappa_n }\right)^n.\end{equation}

Let $Z$ denote the typical cell of $X$. 
It is known that (see, e.g., \cite[(10.4) and (10.46)]{weil}),
\begin{align}\label{e:typ_vol}
\mathds{E}[V(Z)] = \int V(K) \mathds{Q}(K) = \frac{1}{\lambda} = \frac{1}{\kappa_n}\left( \frac{n \kappa_n }{ \gamma\kappa_{n-1}}\right)^n. 
\end{align}

\begin{rem}\label{r:shannon}
Consider a sequence of hyperplane tessellations $X_n$ in increasing dimensions $\R^n$ with intensity $\gamma_n$ and cell intensity $\lambda_n$. If $\lambda_n \sim e^{n\lambda}$ as $n \to \infty$, this corresponds to  when $\gamma_n \sim \rho n$ as $n \to \infty$. This exponential scaling with dimension for the point process of cell centroids matches the so-called Shannon regime studied in \cite{venkat}, and leads to a linear scaling of the hyperplane intensity with dimension.
\end{rem}

The inradius $r_{in}$ of a cell is the radius of the largest ball completely contained in the cell. The following result gives the distribution of the inradius of the typical cell.  

\begin{theorem}\label{t:typ_inrad} (Theorem 10.4.8 in \cite{weil}) Let $X$ be a nondegenerate stationary Poisson hyperplane process in $\R^n$ with intensity $\gamma$. Let $Z$ be the typical cell. Then,
\begin{align*}
\mathds{P}(r_{in}(Z) \leq a) = 1 - e^{-2\gamma a}, \qquad a \geq 0.
\end{align*}
\end{theorem}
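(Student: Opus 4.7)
My plan is to apply the multivariate Slivnyak--Mecke formula to the Poisson hyperplane process $X$, taking the incenter as the center function $c$ in the definition of the typical cell. For almost every cell $P$ of the induced tessellation, the incenter $c(P)$ is unique and the inscribed ball $B(c(P), r_{in}(P))$ is tangent to exactly $n+1$ hyperplanes of $X$, since higher-order tangencies occur with probability zero under the Poisson law. With this choice of $c$, the defining identity of the typical cell yields, for any bounded Borel set $B \subset \R^n$ of positive volume,
\begin{align*}
\lambda |B| \, \mathds{P}(r_{in}(Z) > a) = \mathds{E} \#\bigl\{P \in \hat{X} : c(P) \in B, \, r_{in}(P) > a \bigr\},
\end{align*}
and each cell on the right is determined by an unordered $(n+1)$-tuple of hyperplanes of $X$ tangent to a common ball $B(x, r)$ with $x \in B$ and $r > a$, together with the void event that no other hyperplane of $X$ meets $B(x, r)$.

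The next step is to apply Slivnyak--Mecke, which rewrites the right-hand side as
\begin{align*}
\frac{1}{(n+1)!} \int e^{-2\gamma r(H_1, \ldots, H_{n+1})} \, \mathbf{1}\{\text{tangency conditions}\} \, \Theta(dH_1) \cdots \Theta(dH_{n+1}),
\end{align*}
with the factor $e^{-2\gamma r}$ coming from the void probability, using the identity $\Theta(\{H : H \cap B(x, r) \neq \emptyset\}) = 2\gamma r$, valid for every $x \in \R^n$ by the decomposition of $\Theta$ and the symmetry $\phi(-A) = \phi(A)$. I would then reparametrize each $H_i = H(u_i, \tau_i)$ in terms of the tangency data $(x, r, u_1, \ldots, u_{n+1})$ via $\tau_i = \langle x, u_i \rangle + r$, where $u_i$ is the outward normal to $H_i$ at its tangent point to $B(x, r)$. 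The Jacobian of this transformation depends only on $u_1, \ldots, u_{n+1}$, so the integral factors into a volume factor $|B|$, a one-dimensional integral over $r > a$, and an integral $K$ over direction tuples in $(S^{n-1})^{n+1}$ which is independent of $a$.

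Since the only $a$-dependent factor is $\int_a^{\infty} 2\gamma \, e^{-2\gamma r} \, dr = e^{-2\gamma a}$, we conclude $\lambda |B| \, \mathds{P}(r_{in}(Z) > a) = K' |B| e^{-2\gamma a}$ for a constant $K'$ independent of $a$. Setting $a = 0$ identifies $K' = \lambda$, and the formula $\mathds{P}(r_{in}(Z) \leq a) = 1 - e^{-2\gamma a}$ follows. The hardest part is the careful setup of the counting bijection in the first step: one must restrict the configurations to those in which the candidate ball $B(x, r)$ is genuinely inscribed in a full cell of the tessellation, namely those with $0$ in the interior of the convex hull of $\{u_1, \ldots, u_{n+1}\}$ and with the correct orientations of the tangent hyperplanes. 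Fortunately, this restriction only affects the constant $K$ and not the $a$-dependence, so the exponential form of the answer emerges automatically without computing $K$ explicitly.
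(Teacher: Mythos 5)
Your proposal is correct, and it is essentially the standard argument: the paper itself gives no proof of this statement (it is quoted as Theorem 10.4.8 of the cited reference \cite{weil}), and the proof there proceeds exactly as you describe, via the incenter as center function, the Slivnyak--Mecke formula applied to the $(n+1)$ hyperplanes tangent to the inball, the void probability $e^{-\Theta(\mathcal{F}_{B(x,r)})}=e^{-2\gamma r}$, and the Blaschke--Petkantschin-type reparametrization whose Jacobian depends only on the normal directions. Your observation that the normalization constant need not be computed because it is fixed by setting $a=0$ is a clean way to finish, and your handling of the two delicate points (the a.s.\ uniqueness of the inball with exactly $n+1$ tangent hyperplanes, and the requirement that $0$ lie in the interior of the convex hull of the tangent normals, which together with concavity of the distance-to-boundary function guarantees the ball is the global inball) is sound.
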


%%%%%%%%%%%%%%%%%%%%%%%%%%%%%%%%%%%

\subsection{Palm Distribution}

%%%%%%%%%%%%%%%%%%%%%%%%%%%%%%%%%%%

Throughout this paper, when the underlying data is assumed to be discrete,
it is modeled by a stationary Poisson point process $N$ with intensity $\lambda$.
Since this is an unbounded collection of data, we need some way of examining a typical
data point and the cell of the tessellation that contains it. 

To do this, we use the Palm probability measure of $N$, denoted by $\mathds{P}^0_N$, which is defined as follows.
Let $(\Omega, \mathcal{A}, \{\theta_t\}_{t \in \R^n}, \mathds{P})$ be a stationary framework and $N$ a random measure compatible with the flow $\{\theta_t\}_{t \in \R^n}$, implying $N$ is stationary.
The Palm probability associated with $N$, denoted $\mathds{P}^0_N$, is defined on $(\Omega, \mathcal{A})$ by
\[ \mathds{P}_N^0(A) := \frac{1}{\lambda} \mathds{E}\left[\int_B 1_A \circ \theta_x N(dx) \right],\]
for any bounded Borel set $B$ with volume one. The Palm probability $\mathds{P}_N^0$ can be thought of the distribution of
$N$ conditioned on there being a point at $0$. Thus, to talk about the cell of a typical
data point, we condition on a point being at $0$, and examine the cell of the tessellation it is contained 
in, i.e., the zero cell. There is also the following ergodic interpretation of the Palm probability.
By Birkhoff's Pointwise Ergodic theorem, for all convex averaging sequences $\{K_m\}_{m \geq 1}$ in $\R^n$,
and all $f: \Omega \rightarrow \mathds{R}_{+}$ measurable and in $L_1(\mathds{P}_N^0)$,
\[ \frac{1}{V_n(K_m)} \int_{K_m} f \circ \theta_x N(dx) \rightarrow \lambda \mathds{E}_N^0[f], \text{ as } m \rightarrow \infty, \qquad \mathds{P}-a.s.\]
Thus, we can think of the Palm probability as the empirical average over all the points in a very large ball. The reduced Palm probability measure of $N$, denoted $\mathds{P}^{0,!}_N$ is defined as $\mathds{P}^0_{N - \delta_0}$, that is, the Palm measure with the point at 0 removed. An important result called Slivnyak's theorem states that a Poisson point process has the same distribution as its reduced Palm distribution, i.e. $\mathds{P}^{0,!}_N = \mathds{P}_N$. 

The distribution of the typical cell of a stationary tessellation can also be thought of as the zero cell of the tessellation under the Palm measure of the point process of cell centers. That is, its distribution is that of the cell containing the origin, conditioned on a cell of the tessellation having its center at the origin.

%%%%%%%%%%%%%%%%%%%%%%

\section{Results}

%%%%%%%%%%%%%%%%%%%%%%

In this section, for each $n$, let $X_n$ be a stationary and isotropic Poisson hyperplane process
in $\R^n$ with intensity $\gamma_n$ representing the compression scheme (note that the Poisson
assumption implies that the compression scheme is characterized by a single parameter $\gamma_n>0$, for all dimensions $n$).
The zero cell of the tessellation is denoted $Z_{0,n}$ and the typical cell is denoted $Z_n$.
In the case where the underlying data is discrete, $N_n$ is a stationary Poisson point process
with intensity $\lambda_n$ lying in $\R^n$ and independent of $X_n$, representing the data.
The Palm probability of $N_n$ is denoted by $\mathds{P}_{n}^0$.

As explained in the introduction, the goal is to find the minimum intensity
$\gamma_n$ needed to separate or minimize the distortion of the data $\R^n$ or $N_n$ with high probability
according to various criteria listed there.

%%%%%%%%%%%%%%%%%%%%%%%%%%%%%%%%%%%%%%%%

\subsection{Distance from typical data to nearest data compressed differently}

%%%%%%%%%%%%%%%%%%%%%%%%%%%%%%%%%%%%%%%%

Given a typical data point, we first ask how far away the closest data is that is compressed differently in any direction. When the data is all of $\R^n$, this is the distance to the nearest separating hyperplane in any direction.
To find the distribution of this distance, notice that if no hyperplane hits the ball of radius $r$ centered
on the typical data, then this distance is greater than $r$. %If the underlying data is all of $\R^n$,
This is the spherical contact distribution \cite{weil}:
\[ D_n(r) :=  \mathds{P}\left( X_n\left(\mathcal{F}_{B_n(r)}\right) = 0\right). \]

\begin{prop}
\label{prop:3.1}
Assume $\gamma_n \rightarrow \infty$ as $n \rightarrow \infty$, for example $\gamma_n \sim \rho n^{\alpha}$ as $n \to \infty$ for any $\alpha > 0$. Then, for fixed $r > 0$,
\[\lim_{n \rightarrow \infty}  D_n(r) = 0.\]
\end{prop}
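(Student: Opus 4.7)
The plan is to use the fact that, since $X_n$ is Poisson, the number of hyperplanes meeting $B_n(r)$ is a Poisson random variable with mean $\Theta_n(\mathcal{F}_{B_n(r)})$, so
\[
D_n(r) = \mathds{P}\bigl(X_n(\mathcal{F}_{B_n(r)})=0\bigr) = \exp\bigl(-\Theta_n(\mathcal{F}_{B_n(r)})\bigr).
\]
It then suffices to show that $\Theta_n(\mathcal{F}_{B_n(r)})\to\infty$ as $n\to\infty$, after which the exponential decay forces $D_n(r)\to 0$.

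To compute $\Theta_n(\mathcal{F}_{B_n(r)})$, I would plug $A=\mathcal{F}_{B_n(r)}$ into the decomposition formula from the theorem in the preliminaries. A hyperplane $H(u,\tau)$ with $\tau\ge 0$ hits $B_n(r)$ iff $\tau\le r$, so the indicator $1_{\{H(u,\tau)\in \mathcal{F}_{B_n(r)}\}}$ depends only on $\tau$ and not on $u$. Under isotropy the directional distribution $\phi$ is normalized surface measure $\sigma$ on $S^{n-1}$, which has total mass $1/2$ (from the definition $\phi(A)=\tfrac12\mathds{Q}(\{u^\perp:u\in A\})$). Hence
\[
\Theta_n(\mathcal{F}_{B_n(r)}) = 2\gamma_n\int_{S^{n-1}}\!\!\int_0^\infty 1_{\{\tau\le r\}}\,d\tau\,\phi(du) = 2\gamma_n\cdot r\cdot \tfrac12 = \gamma_n r,
\]
which crucially has no hidden dimensional factor beyond $\gamma_n$ itself, because the spherical symmetry of the ball makes the $S^{n-1}$-integral trivial.

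Combining the two steps gives $D_n(r)=e^{-\gamma_n r}$. Under the hypothesis that $\gamma_n\to\infty$ (in particular for $\gamma_n\sim\rho n^\alpha$ with any $\alpha>0$), the right-hand side tends to $0$ for every fixed $r>0$, which proves the claim.

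There is no real obstacle here: the only slightly subtle point is bookkeeping the factor $\tfrac12$ that comes from $\phi(S^{n-1})$ versus the factor $2$ in the intensity decomposition, and checking that the dimension $n$ does not enter the mean width of $B_n(r)$. Once these are in place, the conclusion is immediate from Poisson void probabilities.
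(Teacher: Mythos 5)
Your proof follows the paper's argument exactly: the Poisson void probability gives $D_n(r)=e^{-\Theta_n(\mathcal{F}_{B_n(r)})}$, the measure of hyperplanes hitting $B_n(r)$ is linear in $r$ with no hidden dimensional factor, and $\gamma_n\to\infty$ then forces $D_n(r)\to 0$. The one slip is the total mass of $\phi$: the paper treats $\phi=\sigma$ as a \emph{probability} measure on $S^{n-1}$ (consistent with $\int_{S^{n-1}}\langle x,u\rangle_+\,\sigma(du)=\frac{\kappa_{n-1}}{n\kappa_n}|x|$ in \eqref{numhypsep} and with the standard spherical contact distribution $1-e^{-2\gamma r}$, cf.\ Theorem \ref{t:typ_inrad}), so the correct value is $\Theta_n(\mathcal{F}_{B_n(r)})=2\gamma_n r$ rather than $\gamma_n r$ --- a factor of $2$ that is immaterial to the limit.
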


\begin{proof}
By the fact that $X$ is Poisson,
\begin{align*}
\lim_{n \rightarrow \infty}  D_n(r) = \lim_{n \rightarrow \infty} \mathds{P}( X_n(\mathcal{F}_{B_n(r)}) = 0) =  \lim_{n \rightarrow \infty} e^{-\Theta_n(\mathcal{F}_{B_n(r)})} =  \lim_{n \rightarrow \infty} e^{-2\gamma_n r} = 0.
\end{align*}
\end{proof}

Another viewpoint to take is the distance to the nearest data compressed differently from the center of a typical cell of the tessellation, where the center is considered to be the center of the largest ball completely contained in the cell. This is equivalent to asking for the distribution of the inradius of the typical cell. Theorem \ref{t:typ_inrad} implies the following.

\begin{prop}
\label{prop:3.2}
Assume $\gamma_n \rightarrow \infty$ as $n \rightarrow \infty$, for example $\gamma_n \sim \rho n^{\alpha}$ for any $\alpha > 0$. Then, for fixed $r > 0$,
\[\lim_{n \rightarrow \infty}  \mathds{P}(r_{in}(Z_n) > r) = 0.\]
\end{prop}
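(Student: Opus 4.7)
The plan is to invoke Theorem \ref{t:typ_inrad} directly, since the conclusion will follow from a single exponential bound together with the hypothesis $\gamma_n \to \infty$.

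More precisely, I would first observe that the typical cell $Z_n$ of the stationary isotropic Poisson hyperplane process $X_n$ in $\R^n$ with intensity $\gamma_n$ satisfies, by Theorem \ref{t:typ_inrad} applied with $a = r$,
\[
\mathds{P}(r_{in}(Z_n) \leq r) = 1 - e^{-2\gamma_n r}, \qquad r \geq 0,
\]
so that the complementary probability is
\[
\mathds{P}(r_{in}(Z_n) > r) = e^{-2 \gamma_n r}.
\]
The second step is to pass to the limit: since $r > 0$ is fixed and by assumption $\gamma_n \to \infty$ (which in particular holds when $\gamma_n \sim \rho n^\alpha$ with $\rho > 0$ and $\alpha > 0$), we have $2\gamma_n r \to \infty$ and therefore $e^{-2\gamma_n r} \to 0$.

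There is essentially no obstacle here; the only thing to check is that the hypotheses of Theorem \ref{t:typ_inrad} are indeed satisfied, namely that $X_n$ is a nondegenerate stationary Poisson hyperplane process, which is granted by the standing assumption in Section 3 that $X_n$ is stationary, isotropic and Poisson with intensity $\gamma_n \in (0,\infty)$. The statement is thus really a one-line corollary of Theorem \ref{t:typ_inrad}, analogous to the computation in the proof of Proposition \ref{prop:3.1}, the only difference being that the role of the spherical contact distribution of $\R^n$ is replaced by the explicit exponential law of the inradius of the typical cell.
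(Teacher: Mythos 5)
Your proof is correct and is exactly the argument the paper intends: Proposition \ref{prop:3.2} is stated as an immediate consequence of Theorem \ref{t:typ_inrad}, which gives $\mathds{P}(r_{in}(Z_n) > r) = e^{-2\gamma_n r} \to 0$ since $\gamma_n \to \infty$. No difference in approach and no gaps.
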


\subsection{Separation of two different data}

%%%%%%%%%%%%%%%%%%%%%%%%%%%%%%%%%%%%%%%%%

The next criterion for separation is the probability that two different data points, one obtained by
some given transformation of the other, are compressed differently, i.e., 
the probability that there is at least one hyperplane separating them.

First, consider the case where the transformation is a random displacement by an i.i.d.
Gaussian with mean zero and variance $\sigma^2$ per dimension. 

\begin{prop}
\label{pro:Gauss}
For each $n$, let $Y_n \sim \mathcal{N}(0, \sigma^2 I_n)$ be a Gaussian random vector 
in $\R^n$. Assume % and $X_n$ be a stationary and isotropic Poisson hyperplane tessellation
%with intensity $\gamma_n$ in $\R^n$ such that as $n \to \infty$, 
$\gamma_n \sim \rho n^{\alpha}$ for some $\rho > 0$ as $n \to \infty$. %Let $Z_{0,n}$ denote the zero cell.
Then,
\begin{align*}
\lim_{n \rightarrow \infty} \mathds{P}(Y_n \in Z_{0,n})=  \begin{cases} 0, & \alpha > 0 \\ e^{-\sqrt{\frac{2}{\pi}}\rho \sigma}, & \alpha = 0 \\ 1, & \alpha  < 0 .\end{cases}
\end{align*}
\end{prop}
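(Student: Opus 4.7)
The plan is to condition on $Y_n$ and use its independence from $X_n$ together with the avoidance function of the Poisson hyperplane process. Given $Y_n = y$, the event $\{y \in Z_{0,n}\}$ is precisely the event that no hyperplane of $X_n$ separates $0$ from $y$, and by the Poisson property this has probability $\exp(-\mu_n(y))$, where $\mu_n(y) := \Theta_n(\{H \in \mathcal{H}^n : H \text{ separates } 0 \text{ and } y\})$. Integrating over the distribution of $Y_n$ then gives
\[
\mathds{P}(Y_n \in Z_{0,n}) \;=\; \mathds{E}\bigl[\exp(-\mu_n(Y_n))\bigr].
\]

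Next I would compute $\mu_n(y)$ via the decomposition theorem. Writing $H(u,\tau)$ with $u \in S^{n-1}$ and $\tau \geq 0$, such a hyperplane separates $0$ from $y$ iff $0 \leq \tau \leq \langle y, u\rangle$. By isotropy $\phi$ is the normalized spherical Lebesgue measure, so, after choosing coordinates with $y = |y| e_1$,
\[
\mu_n(y) \;=\; 2\gamma_n \int_{S^{n-1}} \max(0,\langle y, u\rangle)\, \phi(du) \;=\; 2\gamma_n |y| \int_{S^{n-1}} \max(0, u_1)\, \phi(du) \;=\; \frac{2\gamma_n |y|\,\kappa_{n-1}}{n\,\kappa_n},
\]
where the last equality comes from the standard evaluation $\int_{S^{n-1}} |u_1|\, \phi(du) = 2\kappa_{n-1}/(n\kappa_n)$ together with the symmetry $u\mapsto -u$. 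Applying the asymptotic $\kappa_{n-1}/(n\kappa_n) \sim 1/\sqrt{2\pi n}$ from \eqref{e:asymptotics} yields $\mu_n(y) \sim \sqrt{2/\pi}\,\gamma_n |y|/\sqrt n$.

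For the passage to the limit, write $|Y_n|^2 = \sigma^2 \sum_{i=1}^n Z_i^2$ with $Z_i$ i.i.d.\ standard Gaussian. The strong law of large numbers gives $|Y_n|/\sqrt n \to \sigma$ almost surely, so when $\gamma_n \sim \rho n^{\alpha}$ the random variable $\mu_n(Y_n)$ converges almost surely to $\sqrt{2/\pi}\,\rho\sigma$ for $\alpha = 0$, to $+\infty$ for $\alpha > 0$, and to $0$ for $\alpha < 0$. Since $\exp(-\mu_n(Y_n)) \in [0,1]$, bounded convergence delivers the three claimed limits $e^{-\sqrt{2/\pi}\rho\sigma}$, $0$, and $1$ respectively.

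The one delicate point is the $\alpha > 0$ case, where one must combine the divergence of $\gamma_n |Y_n|/\sqrt n$ with the $(1+o(1))$ error in the asymptotic for $\kappa_{n-1}/(n\kappa_n)$ and still conclude that the integrand tends to $0$ in expectation. I would handle this by restricting to the event $A_n := \{|Y_n|/\sqrt n \geq \sigma/2\}$, whose complement has probability tending to $0$ by the law of large numbers (a Chebyshev bound also suffices, since $\operatorname{Var}(|Y_n|^2) = O(n)$); on $A_n$ the exponent tends to $+\infty$ uniformly, and the contribution from $A_n^c$ is bounded by $\mathds{P}(A_n^c) \to 0$. This is the main technical step; the remainder of the argument is bookkeeping.
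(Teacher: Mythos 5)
Your proposal is correct and follows essentially the same route as the paper's proof: conditioning on $Y_n$, computing $\Theta_n(\mathcal{F}_{[0,y]}) = 2\gamma_n\kappa_{n-1}|y|/(n\kappa_n)$ via the spherical decomposition of the intensity measure, applying the strong law of large numbers to $|Y_n|/\sqrt{n}$, and concluding by bounded convergence. Your extra care in the $\alpha>0$ case (truncating on the event $\{|Y_n|/\sqrt{n}\ge\sigma/2\}$) is a small rigor improvement over the paper's more informal passage to the limit, but it does not change the argument.
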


\begin{proof}
First, by the decomposition of the spherical Lebesgue measure (Equation (1.41) in \cite{Muller}), for all $x\in \mathds{R}^n$,
\begin{align}\label{numhypsep}
\Theta(\mathcal{F}_{[0,x]}) &=  2\gamma_n \int_{S^{n-1}} \int_{0}^{\infty} 1_{\{H(u,t) \cap [0,x] \neq 0\}} dt \sigma(du) = 2\gamma_n \int_{S^{n-1}} \int_{0}^{\infty} 1_{\{0 \leq t \leq \langle x, u \rangle_+\}}  dt \sigma(du) \nonumber  \\
&= 2\gamma_n \int_{S^{n-1}} \langle x, u \rangle_+  \sigma(du) = 2\gamma_n |x|  \int_{S^{n-1}} \left< \frac{x}{|x|}, u \right>_+  \sigma(du) = 2 \gamma_n \frac{\kappa_{n-1}}{n\kappa_n}|x|,
\end{align}
where $a_+=\max(a,0)$.

Then, since $X$ is Poisson, by \eqref{numhypsep},
\begin{align}\label{e:x_in_Z0}
\mathds{P}(x \in Z_{0,n}) = \mathds{P}\left(X\left(\mathcal{F}_{[0,x]} \right) = 0 \right) = e^{- \Theta(\mathcal{F}_{[0,x]})} = e^{- \frac{2\gamma_n \kappa_{n-1}}{n\kappa_n}|x| }.
\end{align}
By \eqref{e:x_in_Z0},
\begin{align*}
\mathds{P}(Y_n \in Z_{0,n}) = \mathds{E}\left[ \mathds{P}(Y_n \in Z_{0,n} | Y_n)\right] = 
 \mathds{E}\left[e^{- \frac{2\gamma_n \kappa_{n-1}}{n\kappa_n}|Y_n| }\right] .
\end{align*}
By the strong law of large numbers, $|Y_n|^2/n \rightarrow \sigma^2$ a.s.,
and by \eqref{e:asymptotics}, as $n \to \infty$,
\begin{align}\label{e:asymp2}
\frac{2\gamma \kappa_{n-1}}{n\kappa_n} \sim \frac{2\rho n^{\alpha} \kappa_{n-1}}{n\kappa_n} \sim \frac{2\rho n^{\alpha}}{\sqrt{2\pi n}} = \sqrt{\frac{2}{\pi}} \rho n^{\alpha - \frac{1}{2}}. \end{align}
Then, as $n \rightarrow \infty$,
\begin{align*}
\frac{2\gamma_n \kappa_{n-1}}{n\kappa_n}|Y_n| \sim \sqrt{\frac{2}{\pi}} \rho n^{\alpha}\frac{|Y_n|}{\sqrt{n}} \to \sqrt{\frac{2}{\pi}}\rho n^{\alpha - \frac{1}{2}}, \text{    a.s.}
\end{align*}
Thus, 
\begin{align*}
\lim_{n \rightarrow \infty} \mathds{E}\left[e^{- \frac{2\gamma_n \kappa_{n-1}}{n\kappa_n}|Y_n| }\right] = \begin{cases} 0, & \alpha > 0 \\ e^{-\sqrt{\frac{2}{\pi}}\rho \sigma}, & \alpha = 0 \\ 1, & \alpha  < 0 .\end{cases}
\end{align*}

\end{proof}

Next, consider the case where the displacement is uniformly chosen
on the sphere of fixed radius $\delta$. By the fact that the tessellation is isotropic, 
this is equivalent to looking at the linear contact distribution for any fixed direction
$u \in S^{n-1}$ at distance $\delta$:
\[ L_u(\delta) := \mathds{P}\left(X \left(\mathcal{F}_{[0, \delta u]}\right) = 0\right).\]

\begin{prop}\label{p:sep_unif}
%Let $X_n$ be a stationary and isotropic hyperplane process in $\R^n$ with intensity $\gamma_n \sim \rho n^{\alpha}$, for $\rho > 0$. 
For each $n$, let $Y_{n, \delta}$ be a uniformly chosen random point on the sphere of radius $\delta$ in $\R^n$. Under the same assumptions as in Proposition \ref{pro:Gauss},
%Then, 
\begin{align*}
\lim_{n \rightarrow \infty} \mathds{P}( Y_{n,\delta} \in Z_{0,n}) =  \begin{cases} 0, & \alpha > \frac{1}{2} \\ e^{-\sqrt{\frac{2}{\pi}}\rho \delta}, & \alpha = \frac{1}{2} \\ 1, & \alpha  < \frac{1}{2} .\end{cases}
\end{align*}
\end{prop}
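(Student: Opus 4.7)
The plan is to mirror the proof of Proposition \ref{pro:Gauss}, exploiting the crucial simplification that here $|Y_{n,\delta}| = \delta$ is deterministic rather than a random norm concentrated around $\sigma\sqrt{n}$. This collapses the expectation step present in the Gaussian case and, as I explain at the end, is exactly what shifts the critical exponent.

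The first step is to use the isotropy of $X_n$. Conditional on $Y_{n,\delta}$, the event $\{Y_{n,\delta} \in Z_{0,n}\}$ depends only on $|Y_{n,\delta}|$ by rotational invariance of the distribution of $Z_{0,n}$. Since $|Y_{n,\delta}|=\delta$ is deterministic, there is no expectation to take: for any fixed $u \in S^{n-1}$,
\[\mathds{P}(Y_{n,\delta} \in Z_{0,n}) \;=\; \mathds{P}(\delta u \in Z_{0,n}) \;=\; L_u(\delta).\]

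The second step is to apply the closed form \eqref{e:x_in_Z0} derived in the proof of Proposition \ref{pro:Gauss} at $x = \delta u$, which immediately gives
\[\mathds{P}(Y_{n,\delta} \in Z_{0,n}) \;=\; \exp\!\left(-\frac{2\gamma_n \kappa_{n-1}}{n\kappa_n}\,\delta\right).\]
The third step is to insert $\gamma_n \sim \rho n^{\alpha}$ and invoke the asymptotic \eqref{e:asymp2} to get
\[\frac{2\gamma_n \kappa_{n-1}}{n\kappa_n}\,\delta \;\sim\; \sqrt{\frac{2}{\pi}}\,\rho\,\delta\, n^{\alpha-1/2} \qquad \text{as } n \to \infty.\]
The three cases $\alpha > 1/2$, $\alpha = 1/2$, $\alpha < 1/2$ then yield the limits $0$, $e^{-\sqrt{2/\pi}\,\rho\delta}$, and $1$, respectively.

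There is essentially no technical obstacle; the proof is shorter than that of Proposition \ref{pro:Gauss}. The only conceptual point worth highlighting is the following: in the Gaussian case, the strong law of large numbers produces an extra factor $|Y_n|/\sqrt{n} \to \sigma$ that effectively contributes one additional factor of $\sqrt{n}$ to the exponent, while in the present fixed-radius setup no such factor appears. This is precisely what moves the critical scaling from $\alpha = 0$ to $\alpha = 1/2$.
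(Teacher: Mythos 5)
Your proof is correct and follows exactly the same route as the paper's: apply the formula \eqref{e:x_in_Z0} at the deterministic radius $\delta$, then use the asymptotic \eqref{e:asymp2} for $\frac{2\gamma_n\kappa_{n-1}}{n\kappa_n}$ and conclude by continuity. Your closing remark correctly identifies why the critical exponent shifts from $\alpha=0$ to $\alpha=\tfrac{1}{2}$ relative to the Gaussian case.
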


\begin{proof}
By \eqref{e:x_in_Z0}, 
\begin{align*}
 \mathds{P}( Y_{n, \delta} \in Z_{0,n}) = \mathds{E}\left[e^{- \frac{2\gamma_n \kappa_{n-1}}{n\kappa_n}|Y_{n,\delta}| }\right] = e^{- \frac{2\gamma_n \kappa_{n-1}}{n\kappa_n} \delta } .
\end{align*}
Then, by the asymptotic formula \eqref{e:asymp2}, as $n \to \infty$,
\[\frac{2\gamma \kappa_{n-1}}{n\kappa_n}\delta \, \sim \sqrt{\frac{2}{\pi}} \rho n^{\alpha - \frac{1}{2}} \delta. \]
By continuity, the conclusion holds.
\end{proof}

Note that a scaling of $\gamma_n$ greater than $n^{\frac{1}{2}}$ (resp. more than a constant) 
is needed for this last separation criterion (resp. that of the Gaussian displacement) to hold
as dimension increases. This is less than what is needed for the expected volume of $V_n(Z_{0,n})$ to be small as seen in the next section. This indicates that in high dimensions, most of the volume of the cell is concentrated in a set of directions with very small measure. 

%%%%%%%%%%%%%%%%%%%%%%%%%%%%%%%%%%

\subsection{Volume of data compressed together}

%%%%%%%%%%%%%%%%%%%%%%%%%%%%%%%%%%

This section is focused on the asymptotic behavior as $n$ goes to infinity
of the volume of the data that is compressed together in a cell of the tessellation. 
The requirement that this volume tends to zero is a first low distortion criterion.
One viewpoint is to examine the volume of data in the cell containing a typical data point.
%contained in the cell of a typical data point, i.e.,
%the zero cell $Z_{0, n}$ of $X_n$. 
When the data is all of $\R^n$, 
this is the just the volume of $Z_{0, n}$. This quantity has been studied
in \cite{HugZero} and \cite{HorrmanZero}. The expected value is
\begin{align}\label{e:vol}
\mathds{E}[V_n(Z_{0,n})] &= n!\kappa_n \left(\frac{n \kappa_n }{2\gamma \kappa_{n-1}} \right)^n = \left((n!\kappa_n)^{1/n} \frac{n \kappa_n }{2\gamma \kappa_{n-1}} \right)^n.
\end{align}
From \cite{HorrmanZero}, the following bounds on higher moments of $V_n(Z_{0,n})$ are obtained:
\begin{align}\label{e:Vmombnd}
 \Gamma(n + 1) \kappa_n^k \left(\frac{n\kappa_n}{2\gamma \kappa_{n-1}} \right)^{kn} \leq \mathds{E}[V_n(Z_{0,n})^k] \leq  \Gamma(kn + 1) \kappa_n^k \left(\frac{n\kappa_n}{2\gamma \kappa_{n-1}} \right)^{kn}.
\end{align}
A corollary in \cite{HorrmanZero} shows there exist constants $c$ and $C$, not depending on $n$ or $\gamma$, such that
\begin{align}\label{zero_var}
c\sqrt{n}\left(\frac{\pi}{e}\frac{n}{\gamma}\left(1 + \frac{1}{n}\right)^{\frac{n}{2}}\right)^{2n} \leq \mathrm{Var}[V_n(Z_{0,n})] \leq C\sqrt{n}\left(\frac{\pi}{e}\frac{n}{\gamma}\left(1 + \frac{1}{n}\right)^{\frac{n}{2}}\right)^{2n}.
\end{align}
The authors note that if $\gamma$ scales with $n$
in such a way that $\mathds{E}[V_n(Z_{0,n})] = 1$ for all $n$,
the lower bound implies that the variance of $V_n(Z_{0,n})$ approaches infinity as the dimension $n$ increases, which contrasts with the behavior seen in the typical cell of the Poisson-Voronoi tessellation, where the variance converges to zero, see \cite{Voronoi}. 

By the asymptotic formulas \eqref{e:asymptotics} and the above results,
we obtain the following limiting behavior as dimension goes to infinity.
\begin{prop}\label{p:vol_thresh}
Let $\gamma_n \sim \rho n$ as $n \to \infty$ for some $\rho > 0$. Then,
\begin{align*}
\lim_{n \rightarrow \infty} \frac{1}{n} \ln \mathds{E}[V_n(Z_{0,n})] = - \ln \rho + \ln \pi - \frac{1}{2}.
\end{align*}
In addition,
\begin{align*}
\lim_{n \rightarrow \infty} \mathds{E}[V_n(Z_{0,n})] = \begin{cases} 0, & \rho > \frac{\pi}{\sqrt{e}} \\ \infty, & \rho < \frac{\pi}{\sqrt{e}}. \end{cases}
\end{align*}
\end{prop}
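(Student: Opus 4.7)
The strategy is a direct asymptotic analysis of the explicit expression \eqref{e:vol}. Substituting $\gamma_n \sim \rho n$ into
\[\mathds{E}[V_n(Z_{0,n})] = n!\,\kappa_n \left(\frac{n \kappa_n }{2\gamma_n \kappa_{n-1}} \right)^n,\]
I would apply Stirling's formula \eqref{e:gamma} to $n!$ together with the two estimates in \eqref{e:asymptotics}. Stirling combined with the first estimate yields $n!\,\kappa_n \sim \sqrt{2}\,(2\pi n/e)^{n/2}$. The second estimate together with $\gamma_n \sim \rho n$ gives $n\kappa_n/(2\gamma_n\kappa_{n-1}) \sim \rho^{-1}\sqrt{\pi/(2n)}$, so its $n$-th power is $\sim \rho^{-n}(\pi/(2n))^{n/2}$. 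Multiplying these two contributions, the $n^{n/2}$ and $2^{n/2}$ factors cancel and one obtains
\[\mathds{E}[V_n(Z_{0,n})] \sim \sqrt{2}\left(\frac{\pi}{\rho\sqrt{e}}\right)^n. \]

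From this asymptotic, both conclusions follow immediately. Taking $\frac{1}{n}\ln$ and using that the prefactor $\sqrt{2}$ contributes only $O(1/n)$, the first limit equals $\ln(\pi/(\rho\sqrt{e})) = -\ln\rho + \ln\pi - \tfrac{1}{2}$. For the second, this is the standard dichotomy for $a^n$ with $a = \pi/(\rho\sqrt{e})$: one has $a<1$ precisely when $\rho > \pi/\sqrt{e}$, giving the limit $0$, and $a>1$ precisely when $\rho<\pi/\sqrt{e}$, giving the limit $\infty$, which is exactly the threshold claimed.

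The main obstacle is simply careful bookkeeping. One has to track all the polynomial-in-$n$ prefactors coming from $\sqrt{2\pi n}$, $1/\sqrt{n\pi}$, and $1/\sqrt{2\pi n}$, and verify in particular that the $\ln n$ coefficients coming from $(n/e)^n$ in Stirling ($+n\ln n$), from $(2\pi e/n)^{n/2}$ in $\kappa_n$ ($-\tfrac{n}{2}\ln n$), and from the $(2n)^{-n/2}$ factor in the $n$-th power ($-\tfrac{n}{2}\ln n$) sum to zero, so that the quantity $\frac{1}{n}\ln\mathds{E}[V_n(Z_{0,n})]$ has a finite limit rather than diverging. Once these cancellations are in place, the remainder of the argument is purely elementary algebra.
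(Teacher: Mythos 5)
Your proposal is correct and follows essentially the same route as the paper: both substitute $\gamma_n \sim \rho n$ into the explicit formula \eqref{e:vol} and apply Stirling together with the estimates \eqref{e:asymptotics} to extract the exponential rate $\pi/(\rho\sqrt{e})$, from which both limits follow. One small caveat: your intermediate claim $\mathds{E}[V_n(Z_{0,n})] \sim \sqrt{2}\,(\pi/(\rho\sqrt{e}))^n$ is slightly too strong, since $a_n \sim b_n$ does not imply $a_n^n \sim b_n^n$ (e.g.\ $\gamma_n = \rho n(1+n^{-1/2})$ spoils it); the paper sidesteps this by writing $\mathds{E}[V_n(Z_{0,n})] = \bigl((n!\kappa_n)^{1/n}\tfrac{n\kappa_n}{2\gamma_n\kappa_{n-1}}\bigr)^n$ and passing to $\tfrac{1}{n}\ln$ before exponentiating, which is exactly the robust version of your "taking $\tfrac{1}{n}\ln$" step and is all that the two stated conclusions require.
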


\begin{proof}
By \eqref{e:asymptotics}, as $n \to \infty$,
\begin{equation*}
(n!\kappa_n)^{1/n} \frac{n \kappa_n }{2\gamma \kappa_{n-1}} \sim \left( \sqrt{2\pi n}\left(\frac{n}{e}\right)^n \frac{1}{\sqrt{n\pi}}\left(\frac{2\pi e}{n}\right)^{n/2}\right)^{1/n}\frac{\sqrt{2\pi n}}{2\gamma}  \sim \frac{n}{e} \frac{\sqrt{2\pi e}}{\sqrt{n}}\frac{\sqrt{\pi n}}{\sqrt{2}\gamma}= \frac{\pi}{\sqrt{e}} \frac{n}{\gamma}.
\end{equation*}
Thus, by \eqref{e:vol}, under the assumption $\gamma_n \sim \rho n$, we have the following limiting behavior:
\begin{align*}
\lim_{n \rightarrow \infty} \frac{1}{n} \ln \mathds{E}[V_n(Z_{0,n})] = \lim_{n \rightarrow \infty}   \ln \left[(n!\kappa_n)^{1/n} \frac{n \kappa_n }{2\gamma \kappa_{n-1}} \right] = \ln \frac{\pi}{\sqrt{e}\rho}  = - \ln \rho + \ln \pi - \frac{1}{2}.
\end{align*}
This implies the last statement.
\end{proof}

Another viewpoint is to consider the volume of the typical cell $Z_n$ of the tessellation. This measures the volume of a typical collection of data that is compressed together. 

\begin{prop}\label{p:typ_vol}
%Let $Z_n$ be the typical cell of the stationary isotropic hyperplane process $X_n$ in $\R^n$ with intensity $\gamma_n$. 
If $\gamma_n \sim \rho n$ for some $\rho > 0$ as $n \to \infty$, then 
\begin{align*}
\lim_{n \rightarrow \infty} \frac{1}{n} \ln \mathds{E}[V_n(Z_n)] = - \ln \rho - \frac{1}{2}.
\end{align*}
In addition,
\[\lim_{n \to \infty} \mathds{E}[V_n(Z_n)] = \begin{cases} 0, & \rho > \frac{1}{\sqrt{e}} \\ \infty, & \rho < \frac{1}{\sqrt{e}}. \end{cases}\]
\end{prop}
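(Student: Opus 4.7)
The plan is to mimic the argument of Proposition \ref{p:vol_thresh}, but starting from the typical-cell expression \eqref{e:typ_vol} instead of the zero-cell expression \eqref{e:vol}. Writing
\[
\mathds{E}[V_n(Z_n)] = \frac{1}{\kappa_n}\left(\frac{n\kappa_n}{\gamma_n \kappa_{n-1}}\right)^n = \left(\kappa_n^{-1/n}\cdot \frac{n\kappa_n}{\gamma_n \kappa_{n-1}}\right)^n,
\]
the task reduces to finding the asymptotics of the base of this $n$-th power.

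First I would handle the second factor using the second asymptotic in \eqref{e:asymptotics}:
\[
\frac{n\kappa_n}{\kappa_{n-1}}\sim \sqrt{2\pi n}, \qquad \text{so}\qquad \frac{n\kappa_n}{\gamma_n \kappa_{n-1}}\sim \frac{\sqrt{2\pi n}}{\rho n} = \sqrt{\frac{2\pi}{\rho^2 n}}.
\]
Next I would extract the $n$-th root of $\kappa_n^{-1}$ from the first asymptotic in \eqref{e:asymptotics}: since $\kappa_n \sim (n\pi)^{-1/2}\bigl(2\pi e/n\bigr)^{n/2}$ and $(n\pi)^{-1/(2n)}\to 1$, one obtains
\[
\kappa_n^{-1/n}\sim \sqrt{\frac{n}{2\pi e}}.
\]
Multiplying the two pieces,
\[
\kappa_n^{-1/n}\cdot \frac{n\kappa_n}{\gamma_n \kappa_{n-1}}\sim \sqrt{\frac{n}{2\pi e}}\cdot \sqrt{\frac{2\pi}{\rho^2 n}}\,\sqrt{n}\cdot \frac{1}{\sqrt{n}} = \frac{1}{\rho\sqrt{e}},
\]
so that $\frac{1}{n}\ln\mathds{E}[V_n(Z_n)]\to \ln(1/(\rho\sqrt e)) = -\ln\rho - \tfrac12$, which is the first claim.

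For the dichotomy, observe that if $L:=-\ln\rho-\tfrac12<0$, i.e.\ $\rho>1/\sqrt{e}$, then for all sufficiently large $n$ we have $\ln \mathds{E}[V_n(Z_n)]\le (L/2)n\to -\infty$, so $\mathds{E}[V_n(Z_n)]\to 0$; and symmetrically if $\rho<1/\sqrt{e}$ then $\ln \mathds{E}[V_n(Z_n)]\ge (L/2)n\to +\infty$, giving the blow-up. No substantial obstacle is expected: the only care needed is in verifying that the sub-exponential prefactors coming from the $\kappa_n$ asymptotics (the $(n\pi)^{-1/(2n)}$ factor and analogous terms) indeed tend to $1$ on the logarithmic scale, so that they do not contaminate the constant in the limit.
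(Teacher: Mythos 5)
Your proposal is correct and follows essentially the same route as the paper: both start from the typical-cell volume formula \eqref{e:typ_vol}, apply the asymptotics \eqref{e:asymptotics} to show the base of the $n$-th power tends to $\frac{1}{\rho\sqrt{e}}$, and deduce the dichotomy from the sign of $-\ln\rho-\tfrac12$. The only difference is cosmetic bookkeeping of the sub-exponential prefactors, which you correctly note vanish on the logarithmic scale.
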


\begin{proof}
By \eqref{e:typ_vol}, the expected value of the volume is
\begin{align*}
\mathds{E}[V_n(Z_n)] = \frac{1}{\kappa_n}\left( \frac{n \kappa_n }{ \gamma\kappa_{n-1}}\right)^n. 
\end{align*}
Then, by \eqref{e:asymptotics}, as $n \to \infty$,
\begin{align*}
\frac{1}{\kappa_n^{1/n}} \frac{n \kappa_n }{ \gamma_n\kappa_{n-1}}&\sim (n\pi)^{1/n}\left(\frac{n}{2\pi e}\right)^{1/2} \frac{\sqrt{2\pi n}}{\gamma_n} \sim \frac{n}{\gamma_n \sqrt{e}}.
\end{align*}
Thus, assuming $\gamma_n \sim \rho n$ as $n \to \infty$ for $\rho > 0$, 
\begin{align*}
\lim_{n \rightarrow \infty} \frac{1}{n} \log \mathds{E}[V(Z_n)] = - \log \rho - \frac{1}{2}.
\end{align*}
The right hand side is positive if $\rho < e^{-1/2}$ and negative if $\rho > e^{-1/2}$, which implies the last statement of the proposition.
\end{proof}

When the data set is (the support of) a stationary Poisson point process, the volume of the zero cell has to be replaced by the number of points of $N_n$ that lie in $Z_{0,n}$.
A similar threshold exists for the expected amount of data in $Z_{0,n}$, but it depends on the intensity of $N_n$. %nother criterion of separation in this case is to ask that 
This then implies that for $\rho$ big enough, the probability 
that there is another data point in the cell of a typical data is small, meaning that with high probability, the cell of the tessellation determines the data uniquely. 

\begin{prop}\label{p:Poi_near}
For each $n$, assume $N_n$ is a Poisson point process in $\R^n$ with intensity
$\lambda_n = n^{n(\alpha - 1)} e^{n \lambda}$ for some $\lambda \in \R$ and $\alpha \in \R$.
Let %$X_n$ be a Poisson hyperplane process in $\R^n$ with intensity $\gamma_n$
%such that 
$\gamma_n \sim \rho n^{\alpha}$ as $n \to \infty$ for some $\rho > 0$.  Then,
\[ \lim_{n \rightarrow \infty} \mathds{E}^{0,!}_n[N_n(Z_{0,n})] = \begin{cases} 0, &\rho > {e^{\lambda}\pi}/{\sqrt{e}}\\ \infty, & \rho > e^{\lambda}\pi/ \sqrt{e} \end{cases}.\]
Thus, for $\rho > \frac{e^{\lambda}\pi}{\sqrt{e}}$, 
\[\lim_{n \rightarrow \infty} \mathds{P}_{n}^0(N_n(Z_{0,n}) = 1) \rightarrow 1.\]
\end{prop}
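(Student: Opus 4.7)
The plan is to reduce $\mathds{E}^{0,!}_n[N_n(Z_{0,n})]$ to the expected volume $\mathds{E}[V_n(Z_{0,n})]$ already analyzed in Proposition~\ref{p:vol_thresh}, and then to match the $\lambda_n$ scaling against that asymptotic. First I would use the independence of $N_n$ and $X_n$ together with Slivnyak's theorem: since $Z_{0,n}$ is a measurable function of $X_n$ alone, under $\mathds{P}^{0,!}_n$ the pair $(N_n, Z_{0,n})$ has the same joint distribution as under $\mathds{P}$. Conditioning on $X_n$ and using the Poisson hypothesis for $N_n$,
\[
\mathds{E}^{0,!}_n[N_n(Z_{0,n})] \;=\; \mathds{E}[N_n(Z_{0,n})] \;=\; \lambda_n\, \mathds{E}[V_n(Z_{0,n})].
\]

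Next I would invoke the asymptotic expansion carried out inside the proof of Proposition~\ref{p:vol_thresh}. Under $\gamma_n \sim \rho n^{\alpha}$, the same Stirling computation gives
\[
(n!\kappa_n)^{1/n}\frac{n\kappa_n}{2\gamma_n \kappa_{n-1}} \;\sim\; \frac{\pi}{\sqrt{e}}\cdot\frac{n}{\gamma_n} \;\sim\; \frac{\pi\, n^{1-\alpha}}{\sqrt{e}\,\rho},
\]
so that by \eqref{e:vol},
\[
\mathds{E}[V_n(Z_{0,n})] \;\sim\; \left(\frac{\pi\, n^{1-\alpha}}{\sqrt{e}\,\rho}\right)^{n}.
\]
Multiplying by $\lambda_n = n^{n(\alpha-1)}e^{n\lambda}$ cancels the $n^{n(1-\alpha)}$ factor exactly and leaves
\[
\lambda_n\,\mathds{E}[V_n(Z_{0,n})] \;\sim\; \left(\frac{e^{\lambda}\pi}{\sqrt{e}\,\rho}\right)^{n},
\]
which tends to $0$ when $\rho > e^{\lambda}\pi/\sqrt{e}$ and to $\infty$ when $\rho < e^{\lambda}\pi/\sqrt{e}$, establishing the first claim.

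For the second claim, under $\mathds{P}^0_n$ the origin is a point of $N_n$ and lies in $Z_{0,n}$, so $N_n(Z_{0,n})\geq 1$. Hence $\{N_n(Z_{0,n}) = 1\}$ coincides with the event that $N_n-\delta_0$ places no point in $Z_{0,n}$. By Markov's inequality applied in the reduced Palm measure,
\[
\mathds{P}^0_n(N_n(Z_{0,n}) \geq 2) \;=\; \mathds{P}^{0,!}_n\!\big(N_n(Z_{0,n}) \geq 1\big) \;\leq\; \mathds{E}^{0,!}_n[N_n(Z_{0,n})],
\]
and the right-hand side vanishes as $n\to\infty$ by the first part whenever $\rho > e^{\lambda}\pi/\sqrt{e}$, giving $\mathds{P}^0_n(N_n(Z_{0,n}) = 1) \to 1$. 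There is no genuine obstacle here beyond the bookkeeping of the exponents: the substantive geometric estimate is already contained in Proposition~\ref{p:vol_thresh}, and the scaling $\lambda_n = n^{n(\alpha-1)}e^{n\lambda}$ is precisely what is needed for the $n^{n(1-\alpha)}$ factors to telescope and isolate the threshold $e^{\lambda}\pi/\sqrt{e}$.
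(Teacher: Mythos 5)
Your proposal is correct and follows essentially the same route as the paper: Slivnyak's theorem reduces $\mathds{E}^{0,!}_n[N_n(Z_{0,n})]$ to $\lambda_n\mathds{E}[V_n(Z_{0,n})]$, and the Stirling asymptotics from Proposition \ref{p:vol_thresh} isolate the threshold $e^{\lambda}\pi/\sqrt{e}$ (the paper works with $\frac{1}{n}\log$ of these quantities, which is the cleaner way to phrase your ``$\sim$'' on $n$-th powers, since an asymptotic equivalence of the bases does not literally transfer to the powers, though the $0$/$\infty$ dichotomy survives). The only substantive variation is the last step, where you bound $\mathds{P}^0_n(N_n(Z_{0,n})\geq 2)$ by Markov's inequality while the paper writes $\mathds{P}(N_n(Z_{0,n})=0)=\mathds{E}[e^{-\lambda_n V_n(Z_{0,n})}]\geq e^{-\lambda_n\mathds{E}[V_n(Z_{0,n})]}$ via Jensen; both yield the same conclusion.
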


\begin{proof}
By Slivnyak's theorem,
\begin{align}\label{e:exp_N}
\mathds{E}^{0, !}_n[N_n(Z_{0,n})] &= \mathds{E}[N_n(Z_{0,n})] =  \mathds{E}\left[ \mathds{E}[N_n(Z_{0,n}) |Z_{0,n}] \right]
= \lambda_n \mathds{E}[V_n(Z_{0,n})].
\end{align}
%and similarly, for any $R > 0$, $\mathds{E}^{0, !}_n[N_n(Z_{0,n} \cap B_n(R))] =  \lambda_n \mathds{E}[V_n(Z_{0,n} \cap B_n(R))]$ and $\mathds{E}^{0, !}_n[N_n(Z_{0,n} \cap B_n(R)^c)] = \lambda_n \mathds{E}[V_n(Z_{0,n} \cap B_n(R)^c)]$.

By the assumption on $\gamma_n$ and \eqref{e:asymptotics},
\begin{align}\label{e:c_n}
\frac{2\gamma_n \kappa_{n-1}}{n \kappa_n} \sim \frac{\sqrt{2}\rho}{\sqrt{\pi}} n^{\alpha - \frac{1}{2}}, \text{ as } n \to \infty.
\end{align}
Then, by \eqref{e:asymptotics} and \eqref{e:c_n}, as $n \rightarrow \infty$,
\begin{equation*}
\frac{1}{n} \log \mathds{E}[V_n(Z_{0,n})] \sim \log(\frac{n}{e}) + \frac{1}{2}\log\frac{2\pi e}{n} + \log\frac{\sqrt{\pi}}{\sqrt{2}\rho n^{\alpha - \frac{1}{2}}} = (1 - \alpha) \log n + \log \frac{\pi}{\rho\sqrt{e}}.
\end{equation*}
By the assumption on $\lambda_n$ and \eqref{e:exp_N}, as $n \to \infty$,
\begin{equation}\label{e:exp_N_asymp}
\frac{1}{n} \log \mathds{E}^{0,!}_n[N_n(Z_{0,n})] \sim \lambda + \log \frac{\pi}{\rho \sqrt{e}}. % \frac{n!\kappa_n}{n^{\frac{n}{2}}} \left(\frac{e^{\lambda}}{\rho}\right)^{n} \sim \sqrt{2} e^{n(\lambda + \frac{1}{2}\ln\frac{2\pi}{e} - \ln \rho)}.
\end{equation}
The threshold follows. Then, by Slivnyak's theorem and Jensen's inequality,
\[\mathds{P}_{n}^0(N_n(Z_{0,n}) = 1) = \mathds{P}(N_n(Z_{0,n}) = 0) = \E[e^{-\lambda_n V_n(Z_{0,n})}] \geq e^{-\lambda_n \mathds{E}[V_n(Z_{0,n})]} = e^{-\mathds{E}^{0,!}_n[N_n(Z_{0,n})]} .\]
Thus, for $\rho > {e^{\lambda}\pi}/{\sqrt{e}}$,
\[\lim_{n \to \infty} \mathds{P}_{n}^0(N_n(Z_{0,n}) = 1) = 1.\]
\end{proof}

%%%%%%%%%%%%%%%%%%%%%%%%%%%%%%%%%%%%%%%%%

\subsection{Farthest distance between two data points compressed together}

%%%%%%%%%%%%%%%%%%%%%%%%%%%%%%%%%%%%%%%%%

Another and more demanding low distortion criterion is that all the data compressed together be close in Euclidean distance. Consider first the case when the data is all of $\R^n$. We want to find the scaling necessary for $\gamma_n$ to ensure that all data points in the zero cell are within some distance from the typical data point at the origin. This is equivalent to showing that the radius of the smallest ball centered at the origin that contains all of the zero cell is small. As mentioned in Section \ref{s:zero}, a closed form for the distribution of this radius $R_M$ is only known in dimension two, but we can obtain bounds that give the following asymptotic behavior.

\begin{theorem}\label{t:verts}
%Let $Z_{0, n}$ be the zero cell of a stationary and isotropic hyperplane tessellation with intensity 
Assume $\gamma_n \sim \rho n^{\alpha}$ as $n \to \infty$ and let $R > 0$. Then, there exists $\rho_u > \frac{\sqrt{\pi}}{R\sqrt{2}}$ such that for all $\rho > \rho_u$,
\[\lim_{n \to \infty} \PP(R_M(Z_{0,n}) \geq n^{3/2 - \alpha}R) = 0.\]
Also, there exists $\rho_{\ell} < \frac{\sqrt{\pi}}{R\sqrt{2}}$ such that for all for $\rho < \rho_{\ell}$,
\[\lim_{n \to \infty} \PP(R_M(Z_{0,n}) \leq n^{3/2 - \alpha}R) = 0.\]
\end{theorem}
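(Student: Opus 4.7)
Plan: My plan rests on the equivalence $\{R_M(Z_{0,n})\ge r\} = \{Z_{0,n}\cap rS^{n-1}\ne\emptyset\}$, valid because $Z_{0,n}$ is convex and $0\in Z_{0,n}$. By \eqref{e:x_in_Z0}, each $x$ with $|x|=r$ satisfies $\PP(x\in Z_{0,n})=e^{-c_n r}$, where $c_n := 2\gamma_n\kappa_{n-1}/(n\kappa_n)$. Under $\gamma_n\sim\rho n^\alpha$ and $r=n^{3/2-\alpha}R$, the asymptotic in \eqref{e:asymp2} gives $c_n r\sim\sqrt{2/\pi}\,\rho R\,n$ independently of $\alpha$; the critical value $\sqrt{\pi}/(R\sqrt{2})$ in the statement is precisely where this rate $\sqrt{2/\pi}\,\rho R$ equals $1$, the threshold governing the upper incomplete gamma function $\Gamma_u(n,c_n r)$ that will enter the estimates below.

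Upper bound ($\rho>\rho_u$). Since the maximum of $|\cdot|$ on the convex polytope $Z_{0,n}$ is attained at a vertex, the number $M_n$ of vertices of $Z_{0,n}$ at distance $\ge r$ satisfies $\PP(R_M(Z_{0,n})\ge r) = \PP(M_n\ge 1)\le \E[M_n]$. By the Slivnyak--Mecke formula combined with the Blaschke--Petkantschin change of variables (reparametrizing an $n$-tuple of hyperplanes in general position by their common intersection $v$ and their $n$ unit normals $u_1,\ldots,u_n$), one obtains
\[
\E[M_n] = \frac{(2\gamma_n)^n D_n}{n!}\int_{|v|\ge r}e^{-c_n|v|}\,dv = \frac{(2\gamma_n)^n D_n\,\omega_n\,\Gamma(n)}{n!\,c_n^n}\,\Gamma_u(n,c_n r),
\]
where $D_n := \int_{(S^{n-1})^n}|\det[u_1,\ldots,u_n]|\prod_{i=1}^n\mathbf{1}\{\langle e_1,u_i\rangle\ge 0\}\,d\sigma^n$ is a rotation-invariant constant. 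Miles's QR/product-of-beta identity yields $\E[|\det[u_1,\ldots,u_n]|]\sim\sqrt{2e}\,e^{-n/2}$ and hence $D_n\sim\sqrt{2e}\,(2\sqrt{e})^{-n}$; combined with \eqref{e:asymptotics} and Stirling \eqref{e:gamma} one gets $\kappa_n D_n(n\kappa_n/\kappa_{n-1})^n\sim\sqrt{2e/\pi}\,\pi^n/\sqrt{n}$. Applying the Laplace asymptotic $\frac{1}{n}\log\Gamma_u(n,xn)\to 1+\log x-x$ for $x>1$ with $x=\sqrt{2/\pi}\,\rho R$ yields
\[
\frac{1}{n}\log\E[M_n]\;\longrightarrow\;\log\pi+1+\log\bigl(\sqrt{2/\pi}\,\rho R\bigr)-\sqrt{2/\pi}\,\rho R,
\]
which is strictly negative for all $\rho$ above some threshold $\rho_u$ (numerically $\rho_u R\approx 4$) that is strictly greater than $\sqrt{\pi}/(R\sqrt{2})$; the gap $\rho_u-\sqrt{\pi}/(R\sqrt{2})$ is forced by the prefactor $\log\pi+1>0$.

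Lower bound ($\rho<\rho_\ell$). In this regime the same formula gives $\E[M_n]\to\infty$, but upgrading to $\PP(M_n\ge 1)\to 1$ is delicate because the two-point function
\[
\PP(x,y\in Z_{0,n})=\exp\Bigl(-c_n(|x|+|y|)+2\gamma_n\int_{S^{n-1}}\min(\langle x,u\rangle_+,\langle y,u\rangle_+)\,d\sigma(u)\Bigr)
\]
is exponentially enhanced even for orthogonal $x,y$ on $rS^{n-1}$ (Gaussianization $u\sim\mathcal N(0,I_n/n)$ shows that the correction term is of order $n$), so naive second-moment arguments on $M_n$, on the spherical area $W_n=\mathrm{Area}_{n-1}(Z_{0,n}\cap rS^{n-1})$, or on $V_n(Z_{0,n})$ (whose $\mathrm{Var}/\E^2\to\infty$ by \eqref{zero_var}) all fail. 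I would instead work with the radial function $T(u):=\sup\{t\ge 0:tu\in Z_{0,n}\}$, which for each fixed $u\in S^{n-1}$ is exponential with rate $c_n$, and apply a Bonferroni-type argument on a $\theta_n$-net $\{u_1,\ldots,u_{N_n}\}$ on $S^{n-1}$: the events $\{T(u_i)\ge r\}$ are positively associated (FKG for Poisson hyperplane processes, since each is decreasing in the hyperplane configuration), and the explicit two-point formula above quantifies their pair-correlation decay with angular separation. Choosing $\theta_n$ so that $N_n\gg e^{c_n r}$ while the pair-correlation contribution at separation $\theta_n$ remains subdominant allows one to extract a threshold $\rho_\ell<\sqrt{\pi}/(R\sqrt{2})$ such that $\PP(\max_i T(u_i)\ge r)\to 1$, whence $\PP(R_M(Z_{0,n})\ge r)\to 1$ for $\rho<\rho_\ell$.

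The main obstacle is the high-dimensional asymptotics of the prefactors: the determinantal constant $D_n$ via Miles's product-of-beta identity for the upper bound, and the effective decorrelation rate of the $\{T(u)\ge r\}$ events (equivalently, the detailed behaviour of $\int_{S^{n-1}}\min(\langle x,u\rangle_+,\langle y,u\rangle_+)\,d\sigma(u)$ as a function of the angle between $x$ and $y$) for the lower bound. It is precisely these sub-exponential corrections that push $\rho_u$ and $\rho_\ell$ onto opposite sides of the per-point critical value $\sqrt{\pi}/(R\sqrt{2})$, and matching them sharply enough to close the gap is the technically most delicate step.
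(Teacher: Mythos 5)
Your first half is sound and reaches the paper's bound by a legitimately different computation. You control $\PP(R_M(Z_{0,n})\ge r)$ by the expected number of vertices at distance at least $r$ and evaluate that expectation directly via Slivnyak--Mecke and a Blaschke--Petkantschin change of variables; the paper instead identifies $Z_{0,n}$ with the convex dual of the convex hull of a Poisson process obtained as a limit of beta-prime polytopes (Lemma \ref{l:facet} and the results imported from \cite{thale18}), and reads off the expected number of far vertices as the expected number of near facets of the dual. Both routes land on the same exponential rate $\ln\pi+1+\ln\bigl(\sqrt{2/\pi}\,\rho R\bigr)-\sqrt{2/\pi}\,\rho R$, and your identification of $\rho_u$ as the larger root (so that $\rho_u>\frac{\sqrt{\pi}}{R\sqrt{2}}$, with the gap forced by the prefactor $\ln\pi+1>0$) agrees with the paper. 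The subexponential constants ($D_n$ versus the paper's $\Gamma(\frac{n+1}{2})/\Gamma(\frac n2+1)$ factors) are irrelevant at the level of $\frac1n\log$, so nothing essential hinges on your Miles-identity asymptotics.

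The second half has a genuine gap: you never complete it, and the route you sketch (second moment, FKG, a $\theta_n$-net for the radial function $T(u)$, pair-correlation control) is both unfinished and unnecessary. The key observation you are missing is that the event $\{R_M(Z_{0,n})\le r\}$ forces \emph{every} vertex of $Z_{0,n}$ to lie within distance $r$ of the origin, and the zero cell almost surely has at least one vertex; hence $\PP(R_M(Z_{0,n})\le r)\le\PP(\#\{\text{vertices within distance }r\}>0)\le\E[\#\{\text{vertices within distance }r\}]$. This turns the lower-threshold claim into exactly the same first-moment computation as the upper one, with the lower regularized incomplete gamma function $\Gamma_\ell(n,xn)$ in place of $\Gamma_u(n,xn)$: for $x=\sqrt{2/\pi}\,\rho R<1$ one has $\frac1n\ln\Gamma_\ell(n,xn)\to\ln x-x+1<0$, and after adding the $\ln\pi$ from the prefactor the total rate is negative precisely for $x<x_\ell$, i.e.\ for $\rho<\rho_\ell:=x_\ell\frac{\sqrt{\pi}}{R\sqrt{2}}<\frac{\sqrt{\pi}}{R\sqrt{2}}$. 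No positive-association or decorrelation estimates are needed, and your (correct) concern that naive second-moment arguments fail --- e.g.\ because $\mathrm{Var}[V_n(Z_{0,n})]/\E[V_n(Z_{0,n})]^2\to\infty$ by \eqref{zero_var} --- is exactly why the statement is phrased as ``$\PP(R_M\le r)\to 0$'' rather than as a two-sided concentration claim: that one-sided form is what a pure first-moment argument can deliver.
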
 

Before proving the Theorem, we need the following. Define the beta prime density with parameters $n \in \mathbb N$ and $\sigma>0$ as follows:
\[f_{n, \sigma}(x) = c_{n, \sigma}\left(1 + \frac{|x|^2}{\sigma^2}\right)^{-\frac{n+1}{2}} \text{  for  } x \in \R^n, \text{ with } c_{n, \sigma} = \frac{\Gamma(\frac{n+1}{2})}{\sigma^n\pi^{n/2} \Gamma( \frac{1}{2})}.\]
Let $X_1, \ldots, X_m$ be i.i.d random vectors in $\R^n$ with density $f_{n, \sigma}$ and let $P^{\sigma}_{m,n}$ denote the convex hull of these points. Also, define $A := A(X_1, ..., X_n)$ to be the $d-1$ dimensional affine subspace containing the points $X_1, \ldots, X_n$, and let $h(A)$ be the signed distance from the origin to the subspace $A$.  The following lemma gives the probability that the points $X_1, \ldots, X_n$ form a face of $P^{\sigma}_{m,n}$.

\begin{lemma}\label{l:facet}
\begin{equation*}
\begin{aligned}
&\PP\left([X_1, \ldots, X_n] \text{ is a face in } P_{m, n}^{ \sigma} \text{ such that } |h(A)| \leq r\right) \\
& \qquad = \frac{2\Gamma(\frac{d + 1}{2})}{\sigma\Gamma(\frac{ d}{2})\sqrt{\pi}} \int_{-r}^{r} \left(1+\frac{t^2}{\sigma^2}\right)^{-\frac{ n + 1}{2}} \left( \frac{1}{\sigma \pi} \int_{-\infty}^t \left(1+\frac{s^2}{\sigma^2}\right)^{-1} \dint s \right)^{m-n} \dint t. 
\end{aligned}
\end{equation*}
\end{lemma}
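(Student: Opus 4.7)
The plan is to decompose the event by conditioning on the affine hyperplane $A := A(X_1, \ldots, X_n)$ spanned by the first $n$ points, and to compute it in two pieces: the marginal distribution of $A$ under the product of beta prime densities, and the probability that the remaining $m - n$ points all lie on one side of $A$.

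For the one-sided probability, the key fact is that $f_{n,\sigma}$ is rotationally symmetric with one-dimensional Cauchy marginals: a direct integration (essentially a Beta integral) shows that for any $u \in S^{n-1}$, the inner product $\langle u, X_j\rangle$ has density $\frac{1}{\sigma\pi}(1 + s^2/\sigma^2)^{-1}$. Writing $A = \{x : \langle u, x\rangle = t\}$, the probability that a single $X_j$ lies in the half-space $\{\langle u, x\rangle \le t\}$ equals $F(t) := \frac{1}{\sigma\pi}\int_{-\infty}^t(1 + s^2/\sigma^2)^{-1}\,\dint s$, and by independence the probability that all $m - n$ remaining points lie on this side is $F(t)^{m-n}$ (the opposite side giving $(1 - F(t))^{m-n}$). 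Since $F(-t) = 1 - F(t)$ and the weight $(1 + t^2/\sigma^2)^{-(n+1)/2}$ is even in $t$, the two one-sided contributions integrate to equal values over $[-r,r]$; this accounts for the factor $2$ in the stated formula while only one copy of $F(t)^{m-n}$ is kept in the integrand.

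The marginal distribution of $A$ is handled by the affine Blaschke-Petkantschin formula. I re-parametrize $(x_1, \ldots, x_n) \in (\R^n)^n$ as $(u, t, y_1, \ldots, y_n)$ with $u \in S^{n-1}$, $t \in \R$ the signed distance from the origin to $A$, and $y_i \in u^\perp$ such that $x_i = y_i + t u$ and $|x_i|^2 = t^2 + |y_i|^2$; the Jacobian contributes $(n-1)!\,V_{n-1}(\mathrm{conv}(y_1,\ldots,y_n))$ and an overall factor $1/2$ accounting for the double cover $(u,t) \sim (-u,-t)$. The beta prime density factorizes via the identity $1 + (t^2 + |y|^2)/\sigma^2 = (1 + t^2/\sigma^2)(1 + |y|^2/\sigma_t^2)$ with $\sigma_t^2 := \sigma^2 + t^2$, giving
\[
f_{n,\sigma}(x_i) = c_{n,\sigma}(1 + t^2/\sigma^2)^{-(n+1)/2}(1 + |y_i|^2/\sigma_t^2)^{-(n+1)/2}.
\]
The rescaling $y_i = \sigma_t z_i$ reduces the inner integral over $(u^\perp)^n$ to $\sigma_t^{(n-1)(n+1)}\, K_n$ for a constant $K_n$ depending only on $n$, and the identity $(1 + t^2/\sigma^2)^{-n(n+1)/2}\sigma_t^{(n-1)(n+1)} = \sigma^{(n-1)(n+1)}(1 + t^2/\sigma^2)^{-(n+1)/2}$ collapses the exponent to exactly $-(n+1)/2$, which matches the integrand in the claim. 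The $u$-integration then yields a dimensional constant (a multiple of $\omega_n$).

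The remaining task is bookkeeping: the product of $c_{n,\sigma}^n$, the BP Jacobian $(n-1)!/2$, the surface area $\omega_n = 2\pi^{n/2}/\Gamma(n/2)$, the extracted power $\sigma^{(n-1)(n+1)}$, the constant $K_n$, and the factor $2$ from the side-symmetry above should simplify to $\frac{2\Gamma((n+1)/2)}{\sigma\Gamma(n/2)\sqrt{\pi}}$. The main obstacle is the evaluation of
\[
K_n = \int_{(\R^{n-1})^n}V_{n-1}(\mathrm{conv}(z_1,\ldots,z_n))\prod_{i=1}^n(1 + |z_i|^2)^{-(n+1)/2}\,\dint z_1\cdots \dint z_n,
\]
a Selberg-type integral against a product of multivariate Cauchy-like weights. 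This can be handled by writing $(n-1)!^2 V_{n-1}^2 = \det(\langle z_i - z_n, z_j - z_n\rangle)$ and performing iterated one-dimensional Cauchy integrations, or by recognizing it as a specialization of the Ruben-Miles identity for expected volumes of random simplices with Cauchy vertices. Once $K_n$ is in hand, the stated prefactor follows from routine simplification using $\omega_n = 2\pi^{n/2}/\Gamma(n/2)$ and $\Gamma(1/2) = \sqrt{\pi}$.
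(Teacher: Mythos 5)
Your decomposition is the same one the paper uses: condition on the affine hull $A$ of $X_1,\ldots,X_n$, observe that the facet event is that the remaining $m-n$ points all fall in one of the two half-spaces bounded by $A$, compute the one-sided probability as $F(t)^{m-n}$ with $F$ the Cauchy distribution function, and fold the two one-sided contributions into a single integral over $[-r,r]$ using $1-F(t)=F(-t)$ and the evenness of the weight. Where you differ is in how the two distributional inputs are obtained. The paper simply cites \cite{thale18} (Lemma 3.1 for the Cauchy law of the projection of a beta-prime point onto $A^{\perp}$, Corollary 3.6 for the density of $h^2(A)/\sigma^2$, converted by a change of variables into the density of $|h(A)|$), whereas you derive both from scratch: your computation of the one-dimensional marginal via the factorization $1+(s^2+|y|^2)/\sigma^2=(1+s^2/\sigma^2)(1+|y|^2/\sigma_s^2)$ is correct, and your Blaschke--Petkantschin bookkeeping for the $t$-marginal is also correct (the exponent $-n(n+1)/2+(n^2-1)/2=-(n+1)/2$ does come out right). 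What your approach buys is self-containedness; what it costs is the extra integral-geometric machinery.

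The one place your argument is genuinely incomplete is the prefactor: you stop at the Selberg-type constant $K_n$ and only gesture at Ruben--Miles or iterated Cauchy integrations. But you do not actually need $K_n$. Your BP computation already shows that the density of the signed distance $h(A)$ is proportional to $\left(1+t^2/\sigma^2\right)^{-\frac{n+1}{2}}$ on $\R$; since it is a probability density, the constant is forced by
\begin{equation*}
\int_{-\infty}^{\infty}\left(1+\frac{t^2}{\sigma^2}\right)^{-\frac{n+1}{2}}\dint t
=\sigma\,\frac{\sqrt{\pi}\,\Gamma\!\left(\frac{n}{2}\right)}{\Gamma\!\left(\frac{n+1}{2}\right)},
\end{equation*}
which gives exactly the normalization $\frac{\Gamma(\frac{n+1}{2})}{\sigma\sqrt{\pi}\,\Gamma(\frac{n}{2})}$, and the additional factor $2$ comes from the side-symmetry exactly as you describe. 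So the ``main obstacle'' you identify dissolves, and with that observation your proof is complete and matches the paper's formula (note the $d$'s in the stated prefactor are typos for $n$).
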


\begin{proof}
Let $\pi_{A^{\perp}}$ be the projection from $\R^n$ to the 1-dimensional subspace $A^{\perp}$ and define the isometry $I_{A^{\perp}}: A^{\perp} \mapsto \R$ such that $I_{A^{\perp}}(0) = 0$.

By Lemma 3.1 in \cite{thale18}, if $X$ has density $f_{n, \sigma}$, then $I_{A^{\perp}}(\pi_{A^{\perp}}(X))$ has density 
\[f_{1, \sigma}(s) = \frac{1}{\sigma \pi }\left(1 + \frac{s^2}{\sigma^2}\right)^{-1}.\]
 This was stated with $\sigma = 1$ in the reference, but if $X$ has density $f_{n, \sigma}$, then $X/\sigma$ has density $\tilde{f}_{n, 1}$, and the more general statement follows from a change a variables, since $I_{A^{\perp}}(\pi_{A^{\perp}}(X/\sigma)) = I_{A^{\perp}}(\pi_{A^{\perp}}(X))/\sigma$. 

 Also, by Corollary 3.6 in \cite{thale18}, if $X_1, \ldots X_n$ have  the beta prime density $f_{n, 1}$, then $h^2(A)/\sigma^2$ has density 
\[g(t) = \frac{\Gamma(\frac{n + 1}{2})}{\Gamma(\frac{n}{2})\sqrt{\pi}} t^{-\frac{1}{2}}(1 + t)^{-(\frac{n + 1}{2})}1_{\{t \geq 0\}}.\]
By a changes of variables,% $y= \sqrt{t}$, ($2dy = t^{-\frac{1}{2}}dt$),
%\[\PP\left(\frac{|h(A)|}{\sigma} \leq s\right) = \PP\left(\frac{h(A)^2}{\sigma^2} \leq s^2\right) = \frac{\Gamma(\frac{d + 1}{2})}{\Gamma(\frac{d}{2})\sqrt{\pi}} \int_{0}^{s^2}t^{-\frac{1}{2}}(1 + t)^{-(\frac{d + 1}{2})}dt =  \frac{2\Gamma(\frac{d + 1}{2})}{\Gamma(\frac{d}{2})\sqrt{\pi}} \int_0^s (1 + y^2)^{-\frac{d + 1}{2}}dy. \]
%Thus,
\[\PP\left(|h(A)| \leq r \right) = \frac{2\Gamma(\frac{n + 1}{2})}{\Gamma(\frac{n}{2})\sqrt{\pi}} \int_0^{r/\sigma} (1 + y^2)^{-\frac{n + 1}{2}}dy = \frac{2\Gamma(\frac{n + 1}{2})}{\sigma\Gamma(\frac{n}{2})\sqrt{\pi}} \int_0^{r} \left(1 + \frac{t^2}{\sigma^2}\right)^{-\frac{n + 1}{2}}dt.\]
Hence, the distribution of $|h(A)|$ has density 
\[\tilde{h}(t) = \frac{2\Gamma(\frac{n + 1}{2})}{\sigma\Gamma(\frac{n}{2})\sqrt{\pi}}\left(1 + \frac{t^2}{\sigma^2}\right)^{-\frac{n+ 1}{2}}1_{\{t\geq 0\}}.\] 
Then, by the fact that $[X_1, \ldots, X_n]$ is a facet of $P_{m,n}^{\sigma}$ if and only if $I_{A^{\perp}}(\pi_{A^{\perp}}(X_i)) \leq h(A)$ for all $i = n+1, \ldots, m$, or $I_{A^{\perp}}(\pi_{A^{\perp}}(X_i)) \geq h(A)$ for all $i = n+1, \ldots, m$. This gives
\begin{align*}
&\mathds{P}\left([X_1, \ldots, X_n] \text{ is a facet in } P_{m,n}^{ \sigma} \text{ such that } |h(A)| \leq r \right) \\
&= \int_{0}^r \mathds{P}\left([X_1, \ldots, X_n] \text{ is a facet } \text{ in }P_{m, n}^{ \sigma} \, \bigg| \, |h(A)| = t \right) \tilde{h}(t) dt \\
&= \int_{0}^r (\mathds{P}\left( I_{A^{\perp}}(\pi_{A^{\perp}}(X_i)) \leq t \text{ for each } i = n+1, \ldots, m \right) \\
&\qquad \qquad + \mathds{P}\left( I_{A^{\perp}}(\pi_{A^{\perp}}(X_i)) \geq t \text{ for each } i = n+1, \ldots, m \right)) \tilde{h}(t) dt \\
&= \int_{0}^r \left(\int_{-\infty}^t f_{1,\sigma}(s) ds \right)^{m - n} \tilde{h}(t) dt + \int_{0}^r \left(\int_{t}^{\infty} \tilde{f}_{1,\sigma}(s) ds \right)^{m - n} \tilde{h}(t) dt \\
&= \int_{-r}^r \left(\int_{-\infty}^t f_{1,\sigma}(s) ds \right)^{m - n} \tilde{h}(t) dt, 
\end{align*}
where the last inequality follows from the fact that the densities are symmetric. Hence,
\begin{align*}
&\mathds{P}\left([X_1, \ldots, X_n] \text{ is a facet in } P_{m, n}^{ \sigma} \text{ such that } |h(A)| \leq r \right) \\
&\qquad =  \frac{2\Gamma(\frac{n + 1}{2})}{\sigma\Gamma(\frac{n}{2})\sqrt{\pi}}\int_{-h}^{h} \left(1+\frac{t^2}{\sigma^2}\right)^{-\frac{n+ 1}{2}} \left((\sigma\pi)^{-1} \int_{-\infty}^t \left(1+\frac{s^2}{\sigma^2}\right)^{-1} \dint s \right)^{m-n} \dint t. 
\end{align*}
\end{proof}

We can now prove Theorem \ref{t:verts}.
\begin{proof} (of Theorem \ref{t:verts})

Let $X$ be a random vector in $\R^n$ with density $f_{n, \sigma}$.
By a generalization of Lemma 7.7 in \cite{cones}, we have the vague convergence
\begin{equation}\label{e:vcon}
m\PP(m^{-1} X \in \cdot )  \to \nu(\cdot),
\end{equation}
as $m \to \infty$, where $\nu$ is a measure on $\R^n\backslash \{0\}$ with density 
\begin{equation}
\label{eq:s-density}
x \mapsto \frac{2{\sigma}}{\omega_{n + 1}} |x|^{-n - 1}.
\end{equation}

Let $\Pi_{n}(\sigma)$ be a Poisson point process on $\R^n\backslash \{0\}$ with intensity measure $\nu$. %that has a density w.r.t. the Lebesgue measure given by (\ref{eq:s-density}).
%By Lemma \ref{l:beta_conv} and Proposition 3.21 in \cite{Resnick}, (for $\alpha = 1$ equation (4.6) in Cones paper), 
Then, \eqref{e:vcon} implies the following generalization of $(4.6)$ in \cite{cones}: As $m \to \infty$,
\begin{equation}\label{e:beta_conv}
\sum_{i=1}^{m} \delta_{X_i/m} \to \Pi_{n}(\sigma) \text{  in distribution},
\end{equation}
where $X_1, \ldots, X_m$ are i.i.d random vectors in $\R^n$ with density $f_{n, \sigma}$. 
Now, let $P^{\sigma}_{m,n}$ be the convex hull of $X_1, \ldots, X_m$. The convergence \eqref{e:beta_conv} implies that
%where $X_i$ is a random vector in $\R^n$ with density 
%\[ x \mapsto \frac{2}{\omega_{n+1}}\left(1 + \frac{\|x\|^2}{\sigma^2}\right)^{-(\frac{n+1}{2})}.\]
%This implies that
\begin{align*}
&\lim_{m \to \infty} \E[ \# \text{ of faces within distance } mh \text{ in } P_{m, n}^{\sigma} ]
\\
& \qquad = \E[ \# \text{ of faces within distance } h \text{ in } \mathcal{C}(\Pi_{n}(\sigma))],
\end{align*}
with ${\mathcal C}(P)$ denoting the convex hull of the points in set $P$.
Now, by the same argument as in the proof of Theorem 1.21 of \cite{thale18}, the convex dual of ${\mathcal C}(\Pi_{n}(\sigma))$ has the same distribution as the zero cell $Z_{0,n}$ of a stationary and isotropic hyperplane tessellation with intensity $\gamma_n = \frac{\sigma \omega_{n}}{\omega_{n+1}}$.
%Indeed, consider the transformation $T: \R^n\backslash \{0\} \to \mathcal{H}^n$ given by
%\[ x \mapsto H(x) := \{y : \langle y, x \rangle = 1\}.\]
%This transformation maps $\Pi_{n}(\sigma)$ onto a Poisson point process in the space of hyperplanes $\mathcal{H}^n$. The law of this hyperplane process is isotropic, since $\Pi_{n}(\sigma)$ is rotationally invariant. Thus, to show that $T(\Pi_{n}(\sigma))$ has the same distribution as a stationary and isotropic Poisson hyperplane process $X_n$ with intensity $\gamma_n = \frac{\sigma \omega_{n}}{\omega_{n+1}}$, it suffices to consider the distance distribution.  %has the same distribution as $Z_{0,n}$, we consider the nearest neighbor distribution. First, we see that for $Z_{0,n}$,
%First, we see that for $X_n$ with intensity measure $\Theta$,
%\[\Theta(\mathcal{F}_{B(s)}) = 2\sigma\frac{\omega_n}{\omega_{n+1}} s.\]
%And, for $T(\Pi_{n}(\sigma))$, we have
%\[\mathds{E}[N(B(1/s)^c)] = \frac{c_{\sigma}}{\omega_{n+1}} \int_{B(1/s)^c} |x|^{-n-1} dx = \frac{c_{\sigma}\omega_n}{\omega_{n+1}} \int_{1/s}^{\infty} r^{-2}dr = 2\sigma \frac{\omega_n}{\omega_{n+1}} s.\]
%Thus the distributions are the same, and since the are both Poisson, the distribution of the two hyperplane tessellations are the same.
%Then, since the dual of the convex hull of $\Pi_{n}(\sigma)$ has the same distribution as $Z_{0,n}$, 
Hence, the distances to the faces of the convex hull of $\Pi_{n}(\sigma)$ are the reciprocal of the distances to the vertices of $Z_{0, n}$. This gives
\begin{align*}
&\mathds{E}[ \# \text{ of vertices at distance greater than }r\text{ in }Z_{0, n}] \\
& \qquad= \mathds{E}[\# \text{ of faces at distance less than } r^{-1} \text{ in } {\mathcal C}(\Pi_{n}(\sigma))] \\
&\qquad =\lim_{m \to \infty} \E[ \# \text{ of faces at distance less than } mr^{-1} \text{ in } P_{m,n}^{\sigma} ] \\
&\qquad =  \lim_{m \to \infty}  \binom{m}{n} \mathds{P}\left([X_1, \ldots, X_n] \text{ is a face of } P_{m, n}^{\sigma} \text{ such that }|h(A)| \leq mr^{-1} \right) \\
& \qquad =  \lim_{m \to \infty}  \binom{m}{n} \frac{2\Gamma(\frac{n + 1}{2})}{\sqrt{\pi} \Gamma(\frac{n}{2})} \int_{-m/r}^{m/r} \left(1+\frac{t^2}{\sigma^2}\right)^{-\frac{n + 1}{2}} \left( \frac{1}{\pi \sigma} \int_{-\infty}^t \left(1+\frac{s^2}{\sigma^2}\right)^{-1} \dint s \right)^{m-n} \dint t,
\end{align*}
where the last equality follows from Lemma \ref{l:facet}.
%&\qquad = \lim_{m \to \infty} \binom{m}{n} 2 \tilde{c}_{1, \frac{ n + 1}{2}, \sigma} \int_{-m/r}^{m/r} \left(1+\frac{t^2}{\sigma^2}\right)^{-\frac{n + 1}{2}} \left( \tilde{c}_{1, \frac{\alpha + 1}{2}, \sigma} \int_{-\infty}^t \left(1+\frac{s^2}{\sigma^2}\right)^{-1} \dint s \right)^{m-n} \dint t.
%\end{align*}
By the same arguments as in Lemma 4.9 in \cite{thale18}, as $m \to \infty$,
\begin{align*}
\int_{-m/r}^{m/r} \left(1+\frac{t^2}{\sigma^2}\right)^{-\frac{n + 1}{2}} \left( \frac{1}{\pi \sigma} \int_{-\infty}^t \left(1+\frac{s^2}{\sigma^2}\right)^{-1} \dint s \right)^{m-n} \dint t \sim m^{-n} \sigma \pi^{n} \Gamma(n)\Gamma_u\left(n, \pi^{-1} \sigma r\right).
\end{align*}
%and since $\tilde{c}_{1, 1} = \frac{1}{\pi}$, 
Then, since $\binom{m}{n} \sim \frac{m^n}{n!}$ as $m \to \infty$,
\begin{align*}
& \binom{m}{n} \frac{2\Gamma(\frac{n + 1}{2})}{\sigma\sqrt{\pi} \Gamma(\frac{n}{2})}\int_{-m/r}^{m/r} (1+t^2)^{-\frac{n + 1}{2}} \left( \tilde{c}_{1, \frac{n + 1}{2}} \int_{-\infty}^t (1+s^2)^{-1} \dint s \right)^{m-n} \dint t \\
 & \sim \frac{2}{n} \frac{\Gamma(\frac{n + 1}{2})\pi^{n}}{\sqrt{\pi} \Gamma(\frac{n}{2})}  \Gamma_u\left(n, \sigma \pi^{-1} r\right) 
 = \pi^{n - \frac{1}{2}} \frac{\Gamma(\frac{n + 1}{2})}{\Gamma(\frac{n}{2} + 1)}\Gamma_u\left(n, \sigma \pi^{-1}r\right).
\end{align*}
Let $\gamma_n = \frac{\sigma \omega_{n}}{\omega_{n+1}}$, i.e., let $\sigma = \gamma_n  \frac{\omega_{n+1}}{\omega_{n}}$.
Then,
%\[\sigma \pi^{-1}r  =\gamma_n  \frac{\omega_{n+1}}{\pi\omega_{n}} r,\]
%and
\[\mathds{E}[ \# \text{ of vertices farther than } r \text{ in }Z_{0, n}] =  \frac{\Gamma(\frac{n + 1}{2})\pi^{n}}{\sqrt{\pi} \Gamma(\frac{n}{2} + 1)} \Gamma_u\left(n, \gamma_n\frac{\omega_{n+1}}{\pi \omega_{n}} r\right).\]
Similar computations give
\[\mathds{E}[ \# \text{ of vertices closer than } r \text{ in }Z_{0, n}] =  \frac{\Gamma(\frac{n + 1}{2})\pi^{n} }{\sqrt{\pi} \Gamma(\frac{n}{2} + 1)} \Gamma_{\ell}\left(n, \gamma_n\frac{\omega_{n+1}}{\pi \omega_{n}} r\right).\]
Now, by Markov's inequality,
\begin{align*}
\PP(R_M(Z_{0,n}) \geq n^{3/2 - \alpha}R) &= \PP(\# \text{ of vertices farther than } n^{3/2 - \alpha}R \text{ in }Z_{0, n} > 0)\\
& \leq \E[ \# \text{ of vertices farther than }  n^{3/2 - \alpha}R  \text{ in }Z_{0, n}] \\
&= \frac{\Gamma(\frac{n + 1}{2})\pi^{n} }{\sqrt{\pi} \Gamma(\frac{n}{2} + 1)} \Gamma_{u}\left(n, \gamma_n\frac{\omega_{n+1}}{\pi \omega_{n}} n^{3/2 - \alpha}R \right). 
\end{align*}
Also, 
\begin{align*}
\PP(R_M(Z_{0,n}) \leq n^{3/2 - \alpha}R) &\leq \PP(\# \text{ of vertices closer than } n^{3/2 - \alpha}R \text{ in }Z_{0, n}  > 0)\\
& \leq \E[ \# \text{ of vertices closer than }  n^{3/2 - \alpha}R  \text{ in }Z_{0, n}] \\
&=\frac{\Gamma(\frac{n + 1}{2})\pi^{n}}{\sqrt{\pi} \Gamma(\frac{n}{2} + 1)} \Gamma_{\ell}\left(n, \gamma_n\frac{\omega_{n+1}}{\pi \omega_{n}} n^{3/2 - \alpha}R \right).
\end{align*}
By the assumption on $\gamma_n$ and \eqref{e:gamma}, as $n \to \infty$,
\begin{equation*}
\gamma_n\frac{\omega_{n+1}}{\pi \omega_{n}}n^{3/2 - \alpha}R  \sim \rho n^{\alpha} \frac{2\pi^{\frac{n+1}{2}} \Gamma(n/2)}{\pi \Gamma(\frac{n+1}{2}) 2\pi^{n/2}} n^{3/2 - \alpha}R \sim \rho n^{3/2}\left(\frac{2}{\pi n}\right)^{1/2}R = \frac{\rho R\sqrt{2}}{\sqrt{\pi}} n.
\end{equation*}
Then, by Laplace's method (see Lemma A.2 in \cite{Oreilly}), for $\rho > \frac{\sqrt{\pi}}{R\sqrt{2}}$,
\begin{equation*}
\lim_{n \to \infty} \frac{1}{n} \ln \Gamma_{u}\left(n, \gamma_n\frac{\omega_{n+1}}{\pi \omega_{n}} n^{3/2 - \alpha}R \right) = \ln  \frac{\rho R\sqrt{2}}{\sqrt{\pi}} -  \frac{\rho R\sqrt{2}}{\sqrt{\pi}} + 1.
\end{equation*}
and similarly, for $\rho < \frac{\sqrt{\pi}}{R\sqrt{2}}$,
\begin{equation*}
\lim_{n \to \infty} \frac{1}{n} \ln \Gamma_{\ell}\left(n, \gamma_n\frac{\omega_{n+1}}{\pi \omega_{n}} n^{3/2 - \alpha}R \right) = \ln  \frac{\rho R\sqrt{2}}{\sqrt{\pi}} -  \frac{\rho R\sqrt{2}}{\sqrt{\pi}} + 1.
\end{equation*}
Since $\frac{\Gamma(\frac{n + 1}{2})}{\sqrt{\pi} \Gamma(\frac{n}{2} + 1)} = O(n^{-1/2})$, for $\rho > \frac{\sqrt{\pi}}{R\sqrt{2}}$,
\begin{equation*}
\lim_{n \rightarrow \infty} \frac{1}{n} \ln \PP(R_M(Z_{0,n}) \geq n^{3/2 - \alpha}R) \leq  \ln \rho R\sqrt{2\pi}-  \frac{\rho R\sqrt{2}}{\sqrt{\pi}} + 1,
\end{equation*}
and for $\rho < \frac{\sqrt{\pi}}{R\sqrt{2}}$,
\begin{equation*}
\lim_{n \rightarrow \infty} \frac{1}{n} \ln \PP(R_M(Z_{0,n}) \leq n^{3/2 - \alpha}R) \leq  \ln \rho R\sqrt{2\pi}-  \frac{\rho R\sqrt{2}}{\sqrt{\pi}} + 1,
\end{equation*}
The function $ \ln \pi + \ln x - x + 1$ is concave, and has two zeros, one $0 < x_{\ell} < 1$
and one where $x_u > 1$. These zeros determine the values of
$$\rho_{\ell}:= x_{\ell} \frac{\sqrt{\pi}}{R\sqrt{2}}  \quad \mbox{and}\quad \rho_u:=
x_u \frac{\sqrt{\pi}}{R\sqrt{2}},$$
respectively.

\end{proof}

Next consider the case where the underlying data is a Poisson point process, and more precisely the regime where the expected number of points in the zero cell goes to infinity. %A natural requirement is then to ensure that all points in the zero cell are close to each other in Euclidean distance. 
Theorem \ref{t:main_poi} below gives a sufficient condition for all points of the point process which are contained in the zero cell (the cell of the typical data) to be within distance $R_n$ from the point at the origin (the typical data). The result also shows that the same scaling that is sufficient for the criterion to be satisfied is also necessary. %The next lemma gives a sufficient condition for the mean number of data points in the zero cell to tend to infinity. 

%\begin{lemma}\label{lem:main_poi}
%Assume $N_n$ is a Poisson point process in $\R^n$ with intensity $\lambda_n = n^{n(\alpha - 1)} e^{n \lambda}$ for some $\lambda \in \R$.
%Let $X_n$ be a Poisson hyperplane process in $\R^n$ with intensity $\gamma_n$ such that $\gamma_n \sim \rho n^{\alpha}$ as $n \to \infty$ for some $\rho > 0$. Let 
%\begin{equation}
%\rho^*:= \frac{e^{\lambda} \pi}{\sqrt{e}}.
%\end{equation}
%If $\rho < \rho_{*}$, then $\E^0_n[N_n(Z_{0,n})] \to \infty$.
%\end{lemma}

\begin{theorem}\label{t:main_poi}
Consider the setting of the Proposition \ref{p:Poi_near}, with $\lambda$ fixed, and assume that $\rho <\rho^*:= \frac{e^{\lambda} \pi}{\sqrt{e}}$.
\begin{enumerate}
\item[(i)] If $R> \frac {\sqrt{e}} {e^\lambda \sqrt{2\pi}}$,
then $\frac{\sqrt{\pi}}{R\sqrt{2}} <\rho^*$ and for all $\rho$ in the interval
$(\frac{\sqrt{\pi}}{R\sqrt{2}},\rho^*)$,
\begin{eqnarray*}
& & \hspace{-1cm}\limsup_{n \rightarrow \infty} \frac{1}{n} \log \mathds{P}_n^0\left( \max_{x_i \in N_n \cap Z_{0,n}} |x_i| \geq Rn^{\frac{3}{2} - \alpha} \, \bigg| \, N_n(Z_{0,n}) > 0 \right) \\
& & \leq \lambda + \frac{1}{2}\log 2\pi e + \log R - \frac{\sqrt{2}\rho R}{\sqrt{\pi}} + \log 4.
 %c + \ln \rho R - \rho R + 1.
\end{eqnarray*}
\item[(ii)] Let 
$$a(R,\lambda)= \max \left(\left(\lambda + \frac{1}{2}\log 2\pi e + \log R + \log 4\right),1\right)\ge 1.$$
If $R$ is such that %$R> \frac {\sqrt{e}} {e^\lambda \sqrt{2\pi}}\ a(R,\lambda)$,
%which holds for $R$ large enough, then
$\rho_u:=\frac{\sqrt{\pi}}{R\sqrt{2}}\ a(R,\lambda) <\rho^*$, which holds for $R$ large enough, then
for all $\rho$ in the interval $(\rho_u,\rho^*)$,
\begin{align}\label{e:main_gR}
\lim_{n \to \infty} \mathds{P}_n^0\left( \max_{x_i \in N_n \cap Z_0} |x_i| \geq Rn^{\frac{3}{2} - \alpha} \, \bigg| \, N_n(Z_{0,n}) > 0 \right) = 0,
\end{align}
where the convergence is at least exponential of rate 
$\lambda + \frac{1}{2}\log 32\pi eR^2 - \frac{\sqrt{2}\rho R}{\sqrt{\pi}} < 0$.
\item[(iii)] 
For all $\rho < \min\left(\frac{\sqrt{\pi}}{R\sqrt{2}}, \rho^*\right)$,
\begin{eqnarray*}
& & \hspace{-1cm}\limsup_{n \rightarrow \infty} \frac{1}{n} 
\log \mathds{P}_n^0\left( \max_{x_i \in N_n \cap Z_{0,n}} |x_i| \leq n^{\frac{3}{2} - \alpha}R \,\bigg| \, N_n(Z_{0,n})  > 0 \right)
\\ && \leq \lambda + \frac{1}{2}\log 2\pi e + \log R - \frac{\sqrt{2}\rho R}{\sqrt{\pi}} + \log 4.
\end{eqnarray*}
\item[(iv)] %Let 
%$$c(R, \lambda) = \max\left(\left(\lambda + \frac{1}{2}\log 2\pi e + \log R + \log 4\right), 0\right) \geq 0.$$
If $R < (4e^{\lambda}\sqrt{2\pi e})^{-1}$, then %$\rho_{\ell} := \frac{\sqrt{\pi}}{R\sqrt{2}}c(R, \lambda) < \min\left(\frac{\sqrt{\pi}}{R\sqrt{2}}, \rho^*\right)$, and
for all $\rho$ in $\left(0, \min\left(\frac{\sqrt{\pi}}{R\sqrt{2}}, \rho^* \right) \right)$,
\begin{align}\label{e:main_lR}
\lim_{n \to \infty} \mathds{P}_n^0\left( \max_{x_i \in N_n \cap Z_0} |x_i| \leq n^{\frac{3}{2} - \alpha} R \, \bigg| \, N_n(Z_{0,n}) > 0 \right) = 0,
\end{align}
where the convergence is at least exponential of rate 
$\lambda + \frac{1}{2}\log 32\pi eR^2 - \frac{\sqrt{2}\rho R}{\sqrt{\pi}} < 0$.
\end{enumerate}
\end{theorem}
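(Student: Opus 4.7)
The plan is to reduce each of the four parts to a common Markov--Campbell bound on the numerator of the conditional probability, together with a Paley--Zygmund lower bound on the denominator $\mathds{P}(N_n(Z_{0,n})>0)$. Under the Palm probability $\mathds{P}_n^0$, Slivnyak's theorem lets me decompose $N_n = \delta_0 + N_n'$ with $N_n'$ an independent Poisson point process of intensity $\lambda_n$ that is also independent of $X_n$. Set $B = B_n(R n^{3/2-\alpha})$ and $c_n = 2\gamma_n\kappa_{n-1}/(n\kappa_n)$, so that by \eqref{e:c_n} we have $c_n R n^{3/2-\alpha} \sim \frac{\sqrt{2}\rho R}{\sqrt{\pi}} n$.

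For (i), since $|0|=0<Rn^{3/2-\alpha}$ the event $\{\max_i|x_i|\ge Rn^{3/2-\alpha}\}$ forces a point of $N_n'$ to lie in $Z_{0,n}\cap B^c$, so Markov's inequality combined with \eqref{e:x_in_Z0} and Fubini gives
\[
\mathds{P}_n^0\bigl(\max_i |x_i| \geq R n^{3/2-\alpha}\bigr) \leq \lambda_n \mathds{E}[V_n(Z_{0,n}\cap B^c)] = \lambda_n\, \omega_n\, c_n^{-n}\, \Gamma(n)\, \Gamma_u\!\bigl(n, c_n R n^{3/2-\alpha}\bigr)
\]
after passing to polar coordinates. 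Laplace's method (as used in the proof of Theorem \ref{t:verts}) applied to $\Gamma_u(n,xn)$ with $x = \sqrt{2}\rho R/\sqrt{\pi} > 1$ yields $\frac{1}{n}\log \Gamma_u(n,xn) \to \log x - x + 1$; combining with Stirling \eqref{e:gamma} and \eqref{e:asymptotics} shows that the $\log n$ terms cancel exactly, and the numerator rate equals $\lambda + \tfrac{1}{2}\log(2\pi e) + \log R - \tfrac{\sqrt{2}\rho R}{\sqrt{\pi}}$. For the denominator, I apply Paley--Zygmund to the Poisson random variable $N_n'(Z_{0,n})$: by \eqref{e:Vmombnd}, $\mathds{E}[V_n(Z_{0,n})^2]/(\mathds{E} V_n(Z_{0,n}))^2 \le \binom{2n}{n} \sim 4^n/\sqrt{\pi n}$, and since $\lambda_n \mathds{E} V_n(Z_{0,n}) \to \infty$ in the regime $\rho < \rho^*$ (Proposition \ref{p:Poi_near}), this produces $\mathds{P}(N_n'(Z_{0,n})>0) \geq c\sqrt{n}\cdot 4^{-n}$. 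Dividing by this lower bound contributes exactly $\log 4$ to the limsup of $\tfrac{1}{n}\log$, proving (i). Part (ii) is then immediate: the condition $\rho > \rho_u$ makes the displayed rate strictly negative, and the $\max$ with $1$ in $a(R,\lambda)$ guarantees $\rho_u \geq \sqrt{\pi}/(R\sqrt{2})$, which is precisely the threshold required for the Laplace asymptotic above.

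For (iii) and (iv), I use the same template with the roles of $B$ and $B^c$ swapped: the event $\{\max_i|x_i| \leq R n^{3/2-\alpha}\} \cap \{N_n'(Z_{0,n}) > 0\}$ forces $N_n'(Z_{0,n}\cap B) \geq 1$, so Markov and \eqref{e:x_in_Z0} deliver the analogous bound with $\Gamma_u$ replaced by the lower incomplete gamma $\Gamma_\ell$. For $\rho < \sqrt{\pi}/(R\sqrt{2})$ we are in the regime $x=\sqrt{2}\rho R/\sqrt{\pi} < 1$, and Laplace's method applied to $\Gamma_\ell(n,xn)$ returns the same limit $\log x - x + 1$, so the numerator rate is once more $\lambda + \tfrac{1}{2}\log(2\pi e) + \log R - \tfrac{\sqrt{2}\rho R}{\sqrt{\pi}}$; the same Paley--Zygmund denominator bound contributes $\log 4$, yielding (iii). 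Part (iv) follows at once, since the hypothesis $R < (4 e^\lambda \sqrt{2\pi e})^{-1}$ forces $\lambda + \tfrac{1}{2}\log(2\pi e) + \log R + \log 4 < 0$, so the full rate is strictly negative for every $\rho > 0$.

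The main obstacle is the denominator $\mathds{P}(N_n'(Z_{0,n}) > 0)$: a direct Jensen-style upper bound on $\mathds{E}[e^{-\lambda_n V_n(Z_{0,n}\cap B^c)}]$ runs in the wrong direction, whereas the Paley--Zygmund/second-moment route powered by the estimates \eqref{e:Vmombnd} delivers the sharp $4^{-n}$ lower bound that is precisely responsible for the universal additive $\log 4$ appearing in all four exponential-rate formulas.
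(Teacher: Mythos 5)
Your proposal is correct and follows essentially the same route as the paper's proof: Markov's inequality applied to the expected number of points of the (reduced Palm) process in $Z_{0,n}\cap B^c$ (resp.\ $Z_{0,n}\cap B$), the polar-coordinate computation reducing this expectation to $\mathds{E}[N_n(Z_{0,n})]\,\Gamma_u$ (resp.\ $\Gamma_\ell$), Laplace's method for the incomplete gamma asymptotics, and the second-moment (Paley--Zygmund) lower bound on $\mathds{P}(N_n(Z_{0,n})>0)$ via the moment estimates \eqref{e:Vmombnd}, which produces the $\Gamma(2n+1)/\Gamma(n+1)^2\sim 4^n$ factor responsible for the $\log 4$ in all four rates. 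Your identification of the negligibility of the extra Poisson variance term via $\lambda_n\mathds{E}[V_n(Z_{0,n})]\to\infty$ and the derivations of (ii) and (iv) from the sign of the rate also match the paper.
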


\begin{proof}
First, by \eqref{e:x_in_Z0} and two changes of variable,
\begin{align*}
\mathds{E}[N_n(Z_{0,n} \cap B_n(R)^c)] &=  \lambda_n \mathds{E}[V_n(Z_{0,n} \cap B_n(R)^c)]  \\
&= \lambda_n \int_{B_n(R)^c} \mathds{P}(x \in Z_{0,n}) dx  = \int_{B_n(R)^c} e^{- \frac{2\gamma_n\kappa_{n-1}}{n \kappa_n}  |x|} dx \\
&= \lambda_n n \kappa_n \int_R^{\infty} r^{n-1} e^{- \frac{2\gamma_n\kappa_{n-1}}{n \kappa_n} r} dr \\
&=  n\kappa_n \left(\frac{2\gamma_n\kappa_{n-1}}{n \kappa_n} \right)^{-n} \int_{\frac{2\gamma_n\kappa_{n-1}}{n \kappa_n} R}^{\infty} y^{n-1} e^{- y} dy \\
&= \lambda_n n!\kappa_n \left(\frac{2\gamma_n\kappa_{n-1}}{n \kappa_n} \right)^{-n} \Gamma_u\left(n, \frac{2\gamma_n\kappa_{n-1}}{n \kappa_n} R\right) \\
&= \lambda_n \mathds{E}[V_n(Z_{0,n} )] \Gamma_u\left(n, \frac{2\gamma_n\kappa_{n-1}}{n \kappa_n} R\right) \\
&= \mathds{E}[N_n(Z_{0,n})]\Gamma_u\left(n, \frac{2\gamma_n\kappa_{n-1}}{n \kappa_n} R\right),
\end{align*}
and similarly,
\[ \mathds{E}[N_n(Z_{0,n} \cap B_n(R))]  = \mathds{E}[N_n(Z_{0,n} )] \Gamma_{\ell}\left(n, \frac{2\gamma_n\kappa_{n-1}}{n \kappa_n} R\right).\]
By Laplace's method (see Lemma A.2 in \cite{Oreilly}) and \eqref{e:c_n}, if $\frac{\sqrt{2}\rho R}{\sqrt{\pi}} > 1$,
\begin{equation*}
\lim_{n \to \infty} \frac{1}{n} \ln \Gamma_{u}\left(n, \frac{2\gamma_n\kappa_{n-1}}{n \kappa_n} n^{\frac{3}{2} - \alpha}R\right) = \log \frac{\sqrt{2}\rho R}{\sqrt{\pi}} -  \frac{\sqrt{2}\rho R}{\sqrt{\pi}} + 1,
\end{equation*}
and the limit is 1 otherwise. Then, by \eqref{e:exp_N_asymp},%by \eqref{e:gammau},
\begin{equation}\label{e:ZBRc}
\lim_{n \to \infty} \frac{1}{n} \ln \mathds{E}[N_n(Z_{0,n} \cap B_n(n^{\frac{3}{2} - \alpha}R)^c)] = \begin{cases} \lambda + \ln \sqrt{2\pi e}R -  \frac{\sqrt{2}\rho R}{\sqrt{\pi}}, &\frac{\sqrt{2}\rho R}{\sqrt{\pi}} > 1\\ \lambda + \ln \frac{\pi}{\rho \sqrt{e}}, & \frac{\sqrt{2}\rho R}{\sqrt{\pi}} < 1.\end{cases}
\end{equation}
Similarly,
\begin{equation}\label{e:ZBR}
\lim_{n \to \infty} \frac{1}{n} \ln \mathds{E}[N_n(Z_{0,n} \cap B_n(n^{\frac{3}{2} - \alpha}R))] = \begin{cases} \lambda + \ln \sqrt{2\pi e}R  -  \frac{\sqrt{2}\rho R}{\sqrt{\pi}}, &\frac{\sqrt{2}\rho R}{\sqrt{\pi}} < 1\\ \lambda + \ln \frac{\pi}{\rho \sqrt{e}}, & \frac{\sqrt{2}\rho R}{\sqrt{\pi}} > 1.\end{cases}
\end{equation} 

Next, by the fact that $N_n$ is Poisson,
\[ \mathds{E}[N_n(Z_{0,n})^2] = \E[ \mathds{E}[N_n(Z_{0,n})^2 | Z_{0,n}]] = \lambda_n^2\mathds{E}[V_n(Z_{0,n})^2] + \lambda_n\mathds{E}[V_n(Z_{0,n})],\]
and by the second moment inequality, we have
\begin{align}\label{e:NZ_lb}
\mathds{P}(N_n(Z_{0,n}) > 0) &\geq \frac{\mathds{E}[N_n(Z_{0,n})]^2}{\mathds{E}[N_n(Z_{0,n})^2]} = \frac{\lambda_n^2\mathds{E}[V_n(Z_{0,n})]^2}{\lambda_n^2\mathds{E}[V_n(Z_{0,n})^2] + \lambda_n\mathds{E}[V_n(Z_{0,n})]} \nonumber \\
&= \frac{\mathds{E}[V_n(Z_{0,n})]^2}{\mathds{E}[V_n(Z_{0,n})^2]} \left( \frac{1}{1 + \frac{\lambda_n\mathds{E}[V_n(Z_{0,n})]}{\lambda_n^2\mathds{E}[V_n(Z_{0,n})^2]}} \right). 
\end{align}
%The asymptotic behavior follows from the fact that 
Then, by Jensen's inequality,
\begin{align*}
\frac{\lambda_n\mathds{E}[V_n(Z_{0,n})]}{\lambda_n^2\mathds{E}[V_n(Z_{0,n})^2]} &\leq \frac{\lambda_n\mathds{E}[V_n(Z_{0,n})]}{\lambda_n^2\mathds{E}[V_n(Z_{0,n})]^2} = \frac{1}{\lambda_n\mathds{E}[V_n(Z_{0,n})]},
\end{align*}
and by the assumption on $\rho$, $\lim_{n \rightarrow \infty} \lambda_n\mathds{E}[V_n(Z_{0,n})] = \infty$ by Proposition \ref{p:Poi_near}, and so
\[\lim_{n \to \infty} \frac{\lambda_n\mathds{E}[V_n(Z_{0,n})]}{\lambda_n^2\mathds{E}[V_n(Z_{0,n})^2]} = 0.\]
Then, by \eqref{e:vol} and \eqref{e:Vmombnd}, as $n \to \infty$,
\[\mathds{P}(N_n(Z_{0,n}) > 0) \gtrsim  \frac{\mathds{E}[V_n(Z_{0,n})]^2}{\mathds{E}[V_n(Z_{0,n})^2]} \sim \frac{\Gamma(n + 1)^2}{\Gamma(2n + 1)}.\]
Now, by Markov's inequality and \eqref{e:NZ_lb},
\begin{align*}
&\mathds{P}_n^0\left( \max_{x_i \in Z_{0,n} \cap N_n} |x_i| \geq n^{\frac{3}{2} - \alpha}R\, \bigg| \, N_n(Z_{0,n})  > 0 \right) \\
&\qquad = \mathds{P}_n^{0, !}\left( N_n(Z_{0,n} \cap B_n(n^{\frac{3}{2} - \alpha}R)^c) > 0 \, \bigg| \, N_n(Z_{0,n})  > 0 \right) \\
&\qquad = \frac{\mathds{P}_n^{0,!}\left( N_n(Z_{0,n}\cap B_n(n^{\frac{3}{2} - \alpha}R)^c)  > 0 \right)}{\mathds{P}^{0,!}_n(N_n(Z_{0,n}) > 0)}  \\
&\qquad \lesssim \mathds{E}[N_n(Z_{0,n} \cap B_n(n^{\frac{3}{2} - \alpha}R)^c)] \frac{\Gamma(2n + 1)}{\Gamma(n+1)2}.
\end{align*}
Thus, by \eqref{e:asymptotics} and \eqref{e:ZBRc}, for $\rho > \frac{\sqrt{\pi}}{R\sqrt{2}}$,
\begin{equation*}
\limsup_{n \rightarrow \infty} \frac{1}{n} \ln \mathds{P}_n^0\left( \max_{x_i \in N_n \cap Z_{0,n}} |x_i| \geq R \, \bigg| \, N_n(Z_{0,n})  > 0 \right) \leq \lambda + \frac{1}{2}\log 2\pi e + \log R - \frac{\sqrt{2}\rho R}{\sqrt{\pi}} + \log 4.
\end{equation*}
Thus, for all $\rho > \rho_u := \max\{\frac{\sqrt{\pi}}{R\sqrt{2}}, \frac{\sqrt{\pi}}{R\sqrt{2}}(\lambda + \frac{1}{2}\log 2\pi e + \log R + \log 4)\}$, 
\[\lim_{n \rightarrow \infty}  \mathds{P}_n^0\left( \max_{x_i \in N_n \cap Z_{0,n}} |x_i| \geq R \, \bigg| \, N_n(Z_{0,n})  > 0 \right) = 0.\]
This completes the proofs of (i) and (ii).

Now, again by Markov's inequality and \eqref{e:NZ_lb}, 
\begin{align*}
&\mathds{P}_n^0\left( \max_{x_i \in N_n \cap Z_{0,n}} |x_i| \leq n^{\frac{3}{2} - \alpha}R \, \bigg| \, N_n(Z_{0,n})  > 0 \right) \\
%&\qquad = \frac{\mathds{P}_n^0\left( \max_{x_i \in N_n \cap Z_{0,n}} |x_i| \leq n^{\frac{3}{2} - \alpha}R , N_n(Z_{0,n})  > 0 \right)}{\mathds{P}^{0,!}_n(N_n(Z_{0,n}) > 0)}  \\
%&\qquad \leq \frac{\mathds{P}_n^{0,!}\left( N_n(Z_{0,n} \cap B_n(n^{\frac{3}{2} - \alpha}R))  > 0 \right)}{\mathds{P}^{0,!}_n(N_n(Z_{0,n}) > 0)}\\
&\qquad \lesssim \mathds{E}[N_n(Z_{0,n} \cap B_n(n^{\frac{3}{2} - \alpha}R))] \frac{\Gamma(2n + 1)}{\Gamma(n+1)^2}.
\end{align*}
By \eqref{e:asymptotics} and \eqref{e:ZBR}, for $\rho < \frac{\sqrt{\pi}}{R\sqrt{2}}$,
\begin{align*}
&\limsup_{n \rightarrow \infty} \frac{1}{n} \log \mathds{P}_n^0\left( \max_{x_i \in N_n \cap Z_{0,n}} |x_i| \leq n^{\frac{3}{2} - \alpha} R \, \bigg| \, N_n(Z_{0,n})  > 0 \right) \\
&\qquad  \leq  \lambda + \frac{1}{2}\log 2\pi e + \log R - \frac{\sqrt{2}\rho R}{\sqrt{\pi}}+ \log 4.
\end{align*}
Thus, if $R < (4e^{\lambda}\sqrt{2\pi})^{-1}$ then %for $\rho_{\ell} := \frac{\sqrt{\pi}}{R\sqrt{2}}(\lambda + \frac{1}{2}\log 2\pi e + \log R + \log 4) < \frac{\sqrt{\pi}}{R\sqrt{2}}$, we have that for all $\rho_{\ell} < 
for all $\rho < \frac{\sqrt{\pi}}{R\sqrt{2}}$, 
\[\lim_{n \rightarrow \infty}  \mathds{P}_n^0\left( \max_{x_i \in N_n \cap Z_{0,n}} |x_i| \leq n^{\frac{3}{2} - \alpha} R \, \bigg| \, N_n(Z_{0,n})  > 0 \right) = 0.\]
This completes the proofs of (iii) and (iv).
\end{proof}

\begin{rem}
To separate data more efficiently, we would ideally like to assume a relationship between $\lambda_n$ and $\gamma_n$ such that the cells of the tessellation contain more than one point with high probability. The assumption that 
$\lim_{n \rightarrow \infty} \mathds{E}^{0,!}_n[N_n(Z_{0,n})] = \infty$
does not ensure that $\lim_{n \rightarrow \infty} \mathds{P}^0_n(N_n(Z_{0,n}) > 1) = 1$, however.
The second moment method does not help, since this lower bound goes to zero as $n$ goes to infinity for all $\lambda_n$, and thus it remains an open question what scaling of $\lambda_n$ and $\gamma_n$ is needed to ensure $\lim_{n \rightarrow \infty} \mathds{P}^0_n(N_n(Z_{0,n}) > 1) = 1$.
\end{rem}

%For a fixed intensity of hyperplanes, one can also ask about the maximum distance from the origin to a point of the point process contained in the zero cell. Note that $\gamma_n$ needs to scale like $n^{3/2}$ for the threshold in the above result to hold. This is an additional factor of $n^{1/2}$ compared to that for the volume to go to zero. If $\gamma_n$ scales in such a way that $\mathds{E}[V(Z_{n,0})] = 1$, then the above threshold occurs when $R_n = \sqrt{n}R$, indicating that all the volume of the zero cell is at distance scaling with $\sqrt{n}$ from the origin. 

%%%%%%%%%%%%%%%%%%%%%%%%%%%%%%%%%%%

\section{Summary} 

%%%%%%%%%%%%%%%%%%%%%%%%%%%%%%%%%%%
Our results can be summarized in terms of phenomena that successively take place
when increasing $\rho$ for a given $\alpha$ and incrementing $\alpha$, when parameterizing
the intensity of hyperplanes as $\rho n^\alpha$. As soon as $\alpha$ is positive, %there is a $\rho$ above which 
one finds a data arbitrarily close and encoded differently w.h.p.
In addition, a displacement of order $\sqrt{n}$ in a random direction leads to
an encoding which is different w.h.p. When moving to $\alpha > \frac 1 2$, %there is a a $\rho$ above which
a displacement of order one in a random direction leads to an encoding which is different w.h.p.
Further phenomena start appearing when $\alpha=1$ (Shannon regime). When increasing $\rho$, one first
gets a small volume for the typical cell, and then for the zero cell w.h.p. 
At this scale, one can also control distortion, namely the fact that the most distant
data point encoded like the typical data is at distance at most $\sqrt{n}R$ w.h.p. by a proper
choice of $\rho$ with $\rho$ arbitrarily small as $R$ grows.
A new phenomenon appears at $\alpha=\frac{3}{2}$ where a sufficiently large $\rho$
guarantees that the most distant data point encoded like the typical data is at distance at most $R$ w.h.p. 
The following table illustrates how and when this collection of phenomena take place when
increasing $\alpha$ and $\rho$ .
\begin{table}[h]
\centering
 \caption{Labels for different separation and distortion criteria}

  \begin{tabular}{| l | c | }
    \hline
    Measure of good separation/low distortion & Label  \\ \hline
    $\mathds{P}( X_n(\mathcal{F}_{B_n(r)}) = 0)$ & A \\ %\hline
    $\mathds{P}(Y_{n} \in Z_{0,n})$ (Gaussian displ.)  & B  \\ %\hline
    $\mathds{P}(Y_{n, \delta} \in Z_{0,n})$ (Displ. at dist. $\delta$) & C \\ %\hline
    $\mathds{E}[V_n(Z)]$ & D \\
    $\mathds{E}[V_n(Z_{0,n})]$ &  E \\ %\hline
    %$\mathds{P}^0_{N_n}(\max_{x_i \in N_n\cap Z_{0,n}} |x_i| \geq r\, |\, N_n(Z_{n,0})>0)$  & F \\
    %$\mathds{P}(R_M(Z_{0,n}) > \sqrt{n}r )$ & F\\
    $\mathds{P}(R_M(Z_{0,n}) > r )$ & G\\
    %$\mathds{P}^0_{N_n}(\max_{x_i \in N_n} |x_i| > R)$ & $\geq \frac{3}{2}$ & $> \frac{1}{R}$  \\ 
    \hline
  \end{tabular}
  \label{t:summary}
\end{table}

\begin{table}[h!]
  \begin{center}
    \caption{Limit of separation and distortion metrics as $n \to \infty$ for different values of $\alpha$ and $\rho$ when $\gamma_n \sim \rho n^{\alpha}$.}
    \label{tab:table1}
    \begin{tabular}{c | c | c | c | c | m{.9cm} | m{.9cm}| m{.9cm} | c | c | c   }
 
     \multirow{2}{*}{} & $\alpha = 0$ &$\alpha \in (0, \frac{1}{2})$ & $\alpha = \frac{1}{2}$ & $\alpha \in (\frac{1}{2}, 1)$ & \multicolumn{3}{c|}{$\alpha = 1$} & $\alpha \in (1, \frac{3}{2})$ &\multicolumn{2}{c}{$\alpha = \frac{3}{2}$} \\
        % \hline
       & $\rho > 0$ & $\rho > 0$ & $\rho > 0$ & $\rho > 0$ & \multicolumn{3}{c|}{$\rho =  \frac{1}{\sqrt{e}}$  \hspace{.05cm}  $\rho = \frac{\pi}{\sqrt{e}}$ } & $\rho > 0$ & \multicolumn{2}{c}{$\rho_u$} \\
       % <-- Combining two cells with alignment c| and content 12.
      \hline
      A & $e^{- \rho r}$ & 0 & 0 &0 & 0 & 0 & 0 & 0 & 0 & 0 \\
      B &$e^{- \sqrt{\frac{2}{\pi}} \rho \sigma}$ &0 & 0 &0 & 0 & 0 & 0 & 0 & 0 & 0 \\
      C & 1 &1 & $e^{- \sqrt{\frac{2}{\pi}} \rho \delta}$& 0 & 0 & 0 & 0 & 0 & 0 & 0 \\
       D & $\infty$ &$\infty$ & $\infty$ &$\infty$ & $\infty$ & 0 & 0 & 0 & 0 & 0 \\
      E & $\infty$ &$\infty$ & $\infty$ &$\infty$ & $\infty$ & $\infty$ & 0 & 0 & 0 & 0 \\
      %F & 1 &1 & 1 &1 & 1 & 1 & 1 & 1 & 1 & 0   \\
      F & 1 &1 & 1 &1 & 1 & 1 & 1 & 1 & 1 & 0
    \end{tabular}
  \end{center}
\end{table}

\begin{rem}
In the above table, the only distortion measure which was included is 
$\mathds{P}(R_M(Z_{0,n}) > r ),$ but as mentioned, we could also consider $\mathds{P}(R_M(Z_{0,n}) > \sqrt{n}r )$, which follows the information theoretic Shannon regime discussed later in Section \ref{s:commentIT}. In this case the threshold above which this probability is small in high dimensions is for $\alpha = 1$ and $\rho > \rho_u$, and by Remark \ref{r:shannon}, this is the scaling at which the centroids of the cells have intensity growing like $e^{n \lambda}$ with dimension $n$ for some $\lambda \in \R$. 
\end{rem}

%%%%%%%%%%%%%%%%%%%%%%%%%%%%%%%%%%%
\section{Dimension Reduction}\label{s:reduction}
%%%%%%%%%%%%%%%%%%%%%%%%%%%%%%%%%%%

If it is known beforehand that the data lie in a lower dimensional subspace of $\R^n$,
then the number of random hyperplanes needed to encode it may be much less than was evaluated above. If the subspace is known, we can tessellate the subspace directly. But if only the dimension of the subspace known, then we can model the subspace containing the data as a uniform random subspace in $\R^n$ independent of $X_n$. Let $\mathcal{L}$ be a random subspace in $\R^n$ of dimension $m(n)$, independent of the hyperplane tessellation $X$.
If we assume that the data all lie in $\mathcal{L}$, then instead of considering the zero cell $Z_0$ of $X$ in $\R^n$,
we can consider the zero cell $Z_0^{(\mathcal{L})}$ of the tessellation induced by the intersection of $X$ with $\mathcal{L}$.
By radial symmetry, we can just consider a fixed subspace $L$. It is known that $X \cap L$ is
a Poisson hyperplane process with intensity measure
\[\Theta_L(\cdot) = \gamma_{m} \int_{\mathds{S}_L} \int_{\R} 1\{tu + (u^{\perp} \cap L) \in \cdot \} dt \sigma_{m-1}(du),\]
where $\gamma_m = \frac{\omega_m \omega_{n+1}}{\omega_n \omega_{m+1}} \gamma$.
In \cite{HorrmanZero}, the authors showed that 
\begin{align}\label{e:VL_exp}
\mathds{E}[V_{m}(Z_0 \cap L)] = \Gamma(m + 1)\kappa_m\left(\frac{\pi \omega_n }{\gamma_n \omega_{n+1}}\right)^m,
\end{align}
and established the following results on higher moments:
\begin{align}\label{e:VL_moments}
\Gamma(m + 1)^k \kappa_m^k \left(\frac{\pi \omega_n }{\gamma_n \omega_{n+1}}\right)^{km} \leq \mathds{E}[V_m(Z_0 \cap L)^k] \leq \Gamma(2m + 1) \kappa_m^k \left(\frac{\pi \omega_n }{\gamma_n \omega_{n+1}}\right)^{2m}.
\end{align}

Proposition \ref{p:vol_thresh} can be extended to this case:
\begin{prop}
Let $\mathcal{L}_n$ be a random subspace of $\R^n$ with dimension $m_n < n$ such that $m_n \to \infty$ as $n \to \infty$. Let $X_n$ be a stationary and isotropic Poisson hyperplane process in $\R^n$ with intensity $\gamma_n$. Then, if $\gamma_n \sim \rho \sqrt{m_n n}$ for some fixed $\rho > 0$,
\[\lim_{n \rightarrow \infty} \mathds{E}[V_{m_n}(Z_{0,n} \cap \mathcal{L}_n)] = \begin{cases} 0, &\rho > \frac{\pi}{\sqrt{e}} \\ 1 , &\rho < \frac{\pi}{\sqrt{e}}. \end{cases}\]
\end{prop}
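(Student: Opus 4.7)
The plan is to start from the closed form \eqref{e:VL_exp}, namely
\[
\mathds{E}[V_{m_n}(Z_{0,n}\cap\mathcal{L}_n)] = \Gamma(m_n+1)\,\kappa_{m_n}\left(\frac{\pi\omega_n}{\gamma_n\omega_{n+1}}\right)^{m_n},
\]
which holds for any deterministic $m_n$-dimensional subspace by the radial symmetry noted just before the statement. Because the right-hand side does not depend on $L$, the randomness of $\mathcal{L}_n$ plays no role, so the task reduces to a purely asymptotic computation combining three Stirling-type estimates.

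First, I would simplify the ratio of surface areas using $\omega_k = 2\pi^{k/2}/\Gamma(k/2)$, obtaining
\[
\frac{\omega_n}{\omega_{n+1}} = \frac{\Gamma((n+1)/2)}{\sqrt{\pi}\,\Gamma(n/2)} \sim \sqrt{\frac{n}{2\pi}}
\]
by \eqref{e:gamma}. Combined with the scaling hypothesis $\gamma_n \sim \rho\sqrt{m_n n}$, this gives
\[
\frac{\pi\omega_n}{\gamma_n\omega_{n+1}} \sim \frac{1}{\rho}\sqrt{\frac{\pi}{2m_n}} \qquad\text{as }n\to\infty.
\]

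Next, I would apply \eqref{e:gamma} and \eqref{e:asymptotics} to the prefactor: Stirling gives $\Gamma(m_n+1)\sim\sqrt{2\pi m_n}\,(m_n/e)^{m_n}$, and combining with $\kappa_{m_n} \sim (m_n\pi)^{-1/2}(2\pi e/m_n)^{m_n/2}$ yields, after cancellation,
\[
\Gamma(m_n+1)\,\kappa_{m_n} \;\sim\; \sqrt{2}\,\Bigl(\tfrac{2\pi m_n}{e}\Bigr)^{m_n/2}.
\]
Multiplying this by the $m_n$-th power computed in the previous step, the $m_n$-dependent factors collapse and I obtain
\[
\mathds{E}[V_{m_n}(Z_{0,n}\cap\mathcal{L}_n)] \;\sim\; \sqrt{2}\,\Bigl(\tfrac{\pi}{\rho\sqrt{e}}\Bigr)^{m_n}.
\]

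Since $m_n\to\infty$, this limit is $0$ when $\rho>\pi/\sqrt{e}$ and $\infty$ when $\rho<\pi/\sqrt{e}$, giving the stated dichotomy (the $1$ in the statement appears to be a typo for $\infty$, matching the analogous threshold in Proposition \ref{p:vol_thresh}). There is no real analytic obstacle here; the only care needed is to ensure that the $\sqrt{m_n/n}$ behaviour of $\gamma_n$ cancels the two sources of $m_n$-growth precisely, producing a pure base-$\pi/(\rho\sqrt{e})$ exponential in $m_n$. The critical exponent matches the $\rho>\pi/\sqrt{e}$ threshold found for the full-dimensional zero cell, so the effective gain from dimension reduction is visible only through the scaling $\gamma_n \sim \rho\sqrt{m_n n}$ (rather than $\rho n$) rather than through a shift of the threshold constant.
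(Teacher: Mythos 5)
Your computation is correct, and it is exactly the intended argument: the paper states this proposition without proof, as a direct extension of Proposition \ref{p:vol_thresh} via the closed form \eqref{e:VL_exp}, and your Stirling bookkeeping (the ratio $\omega_n/\omega_{n+1}\sim\sqrt{n/2\pi}$, the prefactor $\Gamma(m_n+1)\kappa_{m_n}\sim\sqrt{2}\,(2\pi m_n/e)^{m_n/2}$, and the cancellation leaving $(\pi/(\rho\sqrt{e}))^{m_n}$) reproduces what the authors clearly had in mind. You are also right that the ``$1$'' in the second case of the statement must be a typo for ``$\infty$'': the expectation is asymptotically $\sqrt{2}\,(\pi/(\rho\sqrt{e}))^{m_n}$ with $m_n\to\infty$, so no parameter choice yields a finite nonzero limit, and the analogous Proposition \ref{p:vol_thresh} has the $0/\infty$ dichotomy. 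One small point of rigor: since you raise an asymptotic equivalence to the power $m_n\to\infty$, the multiplicative error $(1+o(1))^{m_n}$ is not controlled in general; the clean way to finish, as in the proof of Proposition \ref{p:vol_thresh}, is to show $\frac{1}{m_n}\log\mathds{E}[V_{m_n}(Z_{0,n}\cap\mathcal{L}_n)]\to\log\frac{\pi}{\rho\sqrt{e}}$, which is nonzero off the critical value and hence yields the stated dichotomy.
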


%Similarly, Theorem \ref{t:main_poi} can be extended to:
%\begin{prop}
%Let $N_{\mathcal{L}_n}$ be a Poisson point process in the random subspace $\mathcal{L}_n$ of $\R^n$ with dimension $m_n < n$ with intensity $\lambda_{n} = m_n^{{m_n}/{2}}e^{m_n \lambda}$ 
%for some $\lambda > 0$ and $m_n \rightarrow \infty$ as $n \rightarrow \infty$. Let $X_n$ be a stationary and isotropic Poisson hyperplane process in $\R^n$ independent of $N_{L_n}$ with intensity $\gamma_n$. 
%Then,
%If $\gamma_n = \rho n^{\alpha - 1} m_n$, then there exists $\rho_{u}$ such that for all $\rho > \rho_u$,
%\begin{align*}
%\lim_{n \rightarrow \infty} \mathds{P}_n^0\left( \max_{x_i \in N_{\mathcal{L}_n} \cap Z_0} |x_i| \geq n^{\frac{3}{2} - \alpha}R \, \bigg| \, N(Z_0)  > 0 \right)  = 0,
%\end{align*}
%and there exists $\rho_{\ell}$ such that for all $\rho < \rho_{\ell}$,
%\begin{align*}
%\lim_{n \rightarrow \infty} \mathds{P}_n^0\left( \max_{x_i \in N_{\mathcal{L}_n} \cap Z_0} |x_i| \leq n^{\frac{3}{2} - \alpha}R \, \bigg| \, N(Z_0)  > 0 \right) = 0.
%\end{align*}
%\end{prop}

Similarly, Theorem \ref{t:verts} can be extended to:
\begin{prop}
Let $\mathcal{L}_n$ be a random subspace of $\R^n$ with dimension $m_n < n$ such that $m_n \rightarrow \infty$ as $n \rightarrow \infty$. Let $X_n$ be a stationary and isotropic Poisson hyperplane process in $\R^n$ with intensity $\gamma_n$, and let $R > 0$
Then, if $\gamma_n \sim \rho n^{\alpha - 1} m_n$ as $n \to \infty$, then there exists $\rho_{u}$ such that for all $\rho > \rho_u$,
\begin{align*}
\lim_{n \rightarrow \infty} \mathds{P}\left( R_M(Z_{0, n} \cap \mathcal{L}_n) \geq n^{\frac{3}{2} - \alpha}R \right)  = 0,
\end{align*}
and there exists $\rho_{\ell}$ such that for all $\rho < \rho_{\ell}$,
\begin{align*}
\lim_{n \rightarrow \infty} \mathds{P}\left( R_M(Z_{0, n} \cap \mathcal{L}_n) \leq n^{\frac{3}{2} - \alpha}R \right) = 0.
\end{align*}
\end{prop}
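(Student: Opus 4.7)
The plan is to reduce the subspace problem to a straight application of Theorem \ref{t:verts} in dimension $m_n$, exploiting the fact that the hyperplane tessellation restricted to an $m_n$-dimensional subspace is again stationary, isotropic, and Poisson in that subspace.

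First, I would invoke the rotational invariance of $X_n$ and the independence of $\mathcal{L}_n$ from $X_n$ to condition on $\mathcal{L}_n = L$ for an arbitrary fixed $m_n$-dimensional linear subspace $L$, so that $R_M(Z_{0,n} \cap \mathcal{L}_n) \stackrel{d}{=} R_M(Z_{0,n} \cap L)$. As recalled just before the statement, the induced process $X_n \cap L$ is a stationary and isotropic Poisson hyperplane process in $L \cong \R^{m_n}$ with intensity $\gamma_{m_n} = \frac{\omega_{m_n}\omega_{n+1}}{\omega_n\omega_{m_n+1}}\gamma_n$, and $Z_{0,n} \cap L$ is exactly the zero cell of the tessellation of $L$ it induces.

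Next, I would re-run the proof of Theorem \ref{t:verts} inside $L$, with every occurrence of the ambient dimension $n$ replaced by $m_n$ and every occurrence of $\gamma_n$ replaced by $\gamma_{m_n}$. Lemma \ref{l:facet} and the beta-prime duality argument apply verbatim in the $m_n$-dimensional isotropic Poisson hyperplane setting, yielding the bounds
\begin{align*}
\E[\#\text{ vertices of } Z_{0,n}\cap L \text{ farther than } r] &= \frac{\Gamma(\tfrac{m_n+1}{2})\pi^{m_n}}{\sqrt{\pi}\,\Gamma(\tfrac{m_n}{2}+1)}\,\Gamma_u\!\left(m_n,\ \gamma_{m_n}\tfrac{\omega_{m_n+1}}{\pi\omega_{m_n}}r\right),\\
\E[\#\text{ vertices of } Z_{0,n}\cap L \text{ closer than } r] &= \frac{\Gamma(\tfrac{m_n+1}{2})\pi^{m_n}}{\sqrt{\pi}\,\Gamma(\tfrac{m_n}{2}+1)}\,\Gamma_\ell\!\left(m_n,\ \gamma_{m_n}\tfrac{\omega_{m_n+1}}{\pi\omega_{m_n}}r\right),
\end{align*}
after which Markov's inequality converts these into upper bounds for $\PP(R_M(Z_{0,n}\cap L)\ge r)$ and $\PP(R_M(Z_{0,n}\cap L)\le r)$ respectively.

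Third, I would plug in $r = n^{3/2-\alpha}R$ and compute the asymptotics using \eqref{e:asymptotics}. The key computation is that $\omega_{k+1}/\omega_k \sim \sqrt{2\pi/k}$, so $\gamma_{m_n} \sim \gamma_n\sqrt{m_n/n} \sim \rho\, n^{\alpha-3/2}\,m_n^{3/2}$ under the hypothesis $\gamma_n \sim \rho n^{\alpha-1}m_n$, and therefore
\[
\gamma_{m_n}\,\frac{\omega_{m_n+1}}{\pi\omega_{m_n}}\,n^{3/2-\alpha}R \;\sim\; \rho\, n^{\alpha-3/2}\,m_n^{3/2}\cdot\sqrt{\tfrac{2}{\pi m_n}}\cdot n^{3/2-\alpha}R \;=\; \frac{\rho R\sqrt{2}}{\sqrt{\pi}}\,m_n.
\]
Since $m_n\to\infty$, Laplace's method (as in the original proof) gives the same limiting rate function $\ln x + \ln\pi - x + 1$ with $x = \rho R\sqrt{2}/\sqrt{\pi}$, whose two zeros $x_\ell < 1 < x_u$ determine $\rho_\ell := x_\ell\sqrt{\pi}/(R\sqrt{2})$ and $\rho_u := x_u\sqrt{\pi}/(R\sqrt{2})$; the prefactor $\Gamma(\tfrac{m_n+1}{2})/(\sqrt{\pi}\,\Gamma(\tfrac{m_n}{2}+1)) = O(m_n^{-1/2})$ is subexponential in $m_n$ and does not affect the thresholds.

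The main obstacle, and the only genuinely new ingredient beyond reusing the proof of Theorem \ref{t:verts}, is verifying that the Laplace-method limit behaves correctly when the driving parameter scales like $m_n$ but the index of the incomplete gamma function is also $m_n$, given that $m_n$ may grow arbitrarily slowly relative to $n$; this is fine because the assumption $m_n\to\infty$ is enough for the original Laplace estimate (which depended only on the index going to infinity) to apply with $n$ replaced by $m_n$, and the algebraic cancellation above ensures the effective argument $\rho R\sqrt{2}/\sqrt{\pi}$ is independent of the relationship between $m_n$ and $n$.
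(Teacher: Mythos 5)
Your proposal is correct and follows exactly the route the paper intends: the paper gives no written proof here, but the proposition is presented as a direct consequence of the fact that $X_n \cap L$ is a stationary isotropic Poisson hyperplane process in $L$ with intensity $\gamma_{m_n} = \frac{\omega_{m_n}\omega_{n+1}}{\omega_n\omega_{m_n+1}}\gamma_n$, followed by a rerun of the proof of Theorem \ref{t:verts} in dimension $m_n$. Your computation that the incomplete gamma argument reduces to $\frac{\rho R\sqrt{2}}{\sqrt{\pi}}\,m_n$, so that the thresholds $\rho_\ell, \rho_u$ come from the same rate function $\ln\pi + \ln x - x + 1$, is exactly the intended calculation.
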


%* This is still not as precise as it should be, but I think we should discuss Theorem 3.1 and see how that should be stated before revising this.

\section{Comments}\label{s:comment}
\subsection{One-Bit Compressed Sensing Comments}\label{s:commentCS}

%The construction of the tessellation is based on an i.i.d. sequence of Gaussians (angular characteristics of the hyperplanes) and a Poisson point process on the real line (distances to the origin).

In this paper, the compression of the data can be considered as a sequence of one-bit measurements, where each bit gives the side of a random hyperplane the data lies on. This is the paradigm of one-bit compressed sensing, and the aim of this section is to 
further connect this theory with the results in this paper.

Traditional compressed sensing is concerned with recovering a signal $x \in \R^n$ from a measurement vector $y = Ax \in \R^m$, where $A$ is some $m \times n$ measurement matrix ($m \leq n$). The goal is to find the smallest $m$ such that the signal $x$ can be recovered from $y$. If $m$ is less than $n$, this problem is ill-posed. However, Tao and Candes \cite{Tao} showed that under the assumption that $x$ is $s$-sparse, i.e. $|\mathrm{supp}(x)| \leq s$, $x$ can be recovered from $y = Ax$, where $A$ is Gaussian matrix, with $m = O\left(s \log \frac{n}{s} \right)$ measurements. 

In general the measurement vector in this set-up requires infinite bit precision. 
One-bit compressed sensing was introduced by Baraniuk and Boufounos in \cite{Bouf} and aims to recover $x$ from the most severely quantized measurements possible: $y = \mathrm{sign}(Ax)$.  This contains just one-bit per measurement. Note that taking these measurements loses all information regarding the norm of $x$, so we can only hope to recover $x/ |x|$. The goal is then to find a $x^* \in S^{n-1}$ 
such that $|x/|x| - x^*| < \delta$ for some error $\delta$.
To reconstruct the signal from $m$ measurements, Plan and Vershynin showed that one can solve the convex optimization
\begin{equation}\label{e:opt}
\min \|x\|_1 \qquad \text{ subject to}\qquad \mbox{sign}(Ax) \equiv y \text{   and   } \|Ax\|_1 = m,
\end{equation}
where $A$ is a $m \times n$ matrix with i.i.d. standard Gaussian entries, see Theorem 1.1 in \cite{PV1}. The original signal is recovered with small error if it can be guaranteed that the reconstructed signal is close in Euclidean distance to the original signal with high probability.  Plan and Vershynin showed this error guarantee specifically for sparse or almost sparse signals using the following two results. First, they showed that if the original signal is effectively sparse (see Remark 1 in \cite{PV1}), the signal returned from the optimization \eqref{e:opt} will also be effectively sparse. Second they use the fact that there is a tessellation of the signal space $S^{n-1} \cap \Sigma_s$, where $\Sigma_s := \{s-\text{sparse signals}\}$, with $m = O(s \log^2(n/s))$ hyperplanes where all cells in the tessellation will have diameter at most $\delta$, i.e., all sparse signals within a cell of the tessellation will be with $\delta$-distance apart from eachother. Thus, the recovered signal will be within distance $\delta$ of the original signal with high probability. In fact, they showed a more general result in \cite{PV2} that, for a subset $K \subseteq S^{n-1}$, all cells of a tessellation with $m \geq C\delta^{-6}\omega(K)^2$ hyperplanes will have diameter at most $\delta$ with probability as least $1 - 3e^{-c\delta^4m}$, where $\omega(K)$ is the Gaussian mean width of the set $K$.

Some recent work has shown that the same geometric techniques can be used to recover a signal $x$, both direction and magnitude, if it is known that $|x| \leq R < \infty$. Instead of linear hyperplanes tessellating $K \subset \mathds{S}^{n-1}$, consider a bounded set $K \subset \R^n$ and tessellate it with affine hyperplanes with normal vectors $a_i$ and translations from the origin $t_i$. %Then we can instead consider the measurements $y = \mathrm{sign}(Ax + t)$, where $A$ has rows $\{a_i\}$ and $t = (t_1, .., t_m)$. To answer this question, 
%Plan and Vershynin extend their results to the case of $K \subset \R^n$ that has diameter bounded by 1, see Theorem 1.10 in \cite{PV2}. 
%These results were then applie
It was shown  in \cite{Baraniuk} to show that a $s$-sparse signal $x$ with $|x| \leq R$ % \in \Sigma_s = \{v \in \R^n : ||v||_0 \leq s\}$ with $||x||_2 \leq R$ 
can be recovered with measurements of the form
\begin{align}\label{affinemeasurement}
y_i = \mathrm{sign}(\langle a_i, x \rangle - t_i), i=1,...,m,
\end{align}
where $t_1, ..., t_m \sim \mathcal{N}(0, R^2)$ are independent of $a_1, ..., a_m$.
It is proved that the following program recovers the signal with small error:
\begin{align}\label{affineprogram}
\mathrm{argmin} \|z\|_1 \qquad \text{ subject to }\qquad |z| \leq R \text{ and } y_i(\langle a_i, z \rangle - t_i) \geq 0, \qquad  \forall i = 1,..., m.
\end{align} 
More specifically, Theorem 2 in \cite{Baraniuk} states that with probability at least $1 - 3\exp(-c\delta^4 m)$, the following holds for all $x \in B_n(R) \cap \Sigma_s$: For $n \geq 2m$ and $m \geq C\delta^{-4} s\log(n/s)$, and for $y$ obtained from the measurement model \eqref{affinemeasurement}, the solution $x^*$ to the program \eqref{affineprogram} satisfies $|x - x^*| \leq \delta R$.

Also, Knudson et al. \cite{Ward} showed that if $t$ is a Gaussian vector with variance depending on $R$, $x$ can be recovered if $|x| \leq R$ by lifting to one dimension higher and using the program \eqref{e:opt}. % in Plan and Vershynin \cite{PV1}. 
They also showed you can estimate the magnitude (but not direction) of a signal $x$ in an annulus $r \leq |x| \leq R$ up to error $\delta$ with $m \gtrsim R^{4}r^{-2}\delta^{-2}$ measurements from evaluating the inverse Gaussian error function. 

If we remove the norm constraint on the signal, one can use a stationary and isotropic hyperplane tessellation to obtain an infinite sequence of one-bit measurements encoding the signal. Instead of minimizing the number of hyperplanes, the intensity of hyperplanes is minimized, as done throughout this paper for the various separation/distortion metrics. The encoding scheme corresponding to a stationary and isotropic Poisson hyperplane tessellation %is based on elementary linear algebra operations and 
is given as follows. %Explain. 
Letting $\{u_i\}_{i \in \mathds{Z}}$ be an i.i.d sequence of normal Gaussian random vectors in $\R^n$, and $\{t_i\}_{i \in \mathds{Z}}$ be the support of a Poisson point process of intensity $\gamma$ in $\R$, then the encoding is given by the one-bit measurements
\begin{equation*}
y_i = \textrm{sign}\left(\langle {u_i}/{|u_i|}, x \rangle -  t_i \right), \qquad i \in \mathds{Z}.
\end{equation*}
The collection of hyperplanes $\{H(u_i, t_i)\}_{i \in \mathds{Z}}$ tessellates all of $\R^n$ and forms a stationary and isotropic Poisson hyperplane process with intensity $\gamma$, and all data within a single cell of the tessellation have the same encoding. 
The results in the paper provide an analysis of the quality of the compression, in terms of theoretical error bounds on the separation of a typical signal from other signals or the distortion of a typical signal. These are based on some metric of the cell that a typical signal lies in, i.e., the zero cell by stationarity. 

The paradigm of one-bit compressed sensing requires the ability to recover the original data given only its one-bit encoding. Given an encoding, if one can identify a member of the cell corresponding to this sequence of bits, one can use this as an approximation of the original data. %Theorem \ref{t:main_poi} guarantees all Poisson data points compressed together are close in Euclidean distance, but 

The convex optimization recovery technique used in the literature for the constrained norm case will return a signal $x^*$ that is one of the vertices of the cell, and knowing that all cells have small diameters ensures that recovered signal is close the original. The analogous strategy for the Poisson hyperplane compression requires showing that the vertex of the zero cell that is furthest from the origin is close in Euclidean distance, and thus the measure of distortion needed to ensure signal recovery through this convex optimization strategy is Theorem \ref{t:verts}. To ensure that the farthest vertex of the cell containing the original signal is within error distance $\delta$ the intensity of hyperplanes $\gamma_n$ must be on the order of $n^{3/2}$. %None of the measurements in the paper ensure that all vertices of the zero cell will be close to the origin in Euclidean distance. %If one has access to the set of discrete signal $N_n$ and if it is possible to do the optimization over this discrete set, our results would give a small recovery error. But if one has access to the data already, the data compression isn't needed? 

An alternative method for reconstruction that returns a point of the cell more likely to be close to the typical signal would provide a more efficient compression. For example, if the reconstruction returns a uniformly distributed signal in the cell determined by the measurements using, for instance, the algorithm for finding an approximate uniform random point in a convex set in \cite{cousins}, this could be guaranteed to be close to the original signal with high probability using results from \cite{Oreilly}.

As seen later, a deterministic grid actually performs better than the isotropic Poisson hyperplane tessellation in the full dimensional case in the sense that a smaller constant $\rho$ is needed to ensure that the furthest vertex, or a uniform random vector in the cell, is close with high probability. However, if the data is sparse, or somehow lower-dimensional, this may make the isotropic case more desirable. In the case of a deterministic grid, only in the best case scenario will the intersection of the tessellation with a random $m$-dimensional subspace be a $m$-dimensional grid. However, in the isotropic case, the intersection will always have the distribution of a $m$-dimensional isotropic hyperplane tessellation. A more complete analysis of the case of sparse and lower dimensional data is left for future work.

%Alternatively, one could reconstruct the signal by taking a random direction $u \in \mathds{S}^{n-1}$ and then performing a convex optimization on the one dimensional interval span$(u) \cap Z_0$. This would return a data point that is much closer to the original signal, as shown in Proposition \ref{p:sep_unif}.

\subsection{Information Theoretic Comments}\label{s:commentIT}

The aim of this section is to connect the results of the present
paper to classical information theory.% and to further justify the proposed framework.

\subsubsection{Channel Coding}
Consider first channel coding. The additive noise channel features the 
transmission of codewords in $\R^n$ ($n$ is referred to the block-length of the code)
through a noisy channel. The white Gaussian noise special case is of
the same nature as that considered in Proposition \ref{pro:Gauss}: each coordinate of
a transmitted codeword is additively blurred by an independent ${\mathcal N}(0,\sigma^2)$ random variable.

In the viewpoint introduced by Poltyrev \cite{polt},
the codebook is a stationary point process in $\R^n$ (e.g., a Poisson point process in the random
coding case) and the decoding scheme consists in saying that the codeword $c$ was transmitted if the received message is in the Voronoi cell of $c$. The latter is the maximal likelihood decoder.
In the regime where the point process has intensity $e^{n\rho}$ for some $\rho \in \R$,
there is a threshold for $\rho$ below which the correct codeword is decoded with a
probability tending to one as $n$ tends to infinity, and above which the probability
of error tends to 1 as $n$ tends to infinity. In Shannon's channel coding theory, the
codewords are constrained to satisfy some power constraint requiring that the Euclidean
norm of a codeword be less than or equal to $\sqrt{nP}$, for some $P$ which is the power per symbol. 
As shown in \cite{venkat} (Lemma 2 and Theorem 7), the Poltyrev viewpoint can be connected to Shannon's channel coding theorem in the high signal to noise ratio case, namely when $P$ tends to infinity. In particular the Shannon capacity then grows like $\frac 1 2 \log(2\pi e P)$ when $P\to \infty$, and the Poltyrev capacity is what one gets asymptotically when subtracting $\frac 1 2 \log(2\pi e P)$ from the Shannon capacity.
\iffalse
A first natural question in this context is that of the properties of
the hyperplane tessellation in ${\mathbb R}^n$ discussed in the present paper 
for Poltyrev type channel coding purposes. Namely, if rather than using an $n$-dimensional
Poisson point process as a codebook, and the associated Voronoi tessellation as decoding regions,
one would use the centroids of the $n$-dimensional Crofton cells as a codebook and the Crofton cells
as decoding regions, would one get similar capacity results? The answer is negative as shown
by the following argument: when scaling the intensity of centroids of the tessellation like 
$e^{n\rho}$ (as done above for the intensity of the Poisson point process),
the intensity of hyperplanes scales like $O(n)$. Hence, if $Y_n$ is the Gaussian displacement in question,
$\mathds{P}(Y_n \in Z)  \leq \mathds{P}(Y_n \in Z_0) \rightarrow 0$ for all values
of $\rho \in \R$, where here $Z$ denotes the typical cell of the hyperplane tessellation.
This shows that the hyperplane coding/decoding is not Poltyrev-optimal (see \cite{venkat}): in
fact, it fails decoding the original signal for all values of $\sigma$.
This should not come as a surprise: in the context of one-bit compressed sensing, the aim is just
the opposite of that of channel coding, since one wants an as good as possible separation between
the typical data and any blurred version of it in the first place.
\fi

\subsubsection{Loss-less One-bit Compression Source Coding}
Consider now source coding, which is more directly related to the setting considered
in the present paper. Consider a source with i.i.d. $N(0,\sigma^2)$ symbols. If there are $n$ such
symbols, with $n$ (also called block-length) large, they lie in a ball of radius $\sqrt{n\sigma^2}$,
which has volume about $e^{n \frac{1}{2}\log(2\pi e \sigma^2)}$. If one wants to represent in a loss-less way
all typical sequences of this type by $2^{\beta n}$ binary compression sequences,
namely all binary sequences of length $\beta n$, the volume per sequence should tend to 0. That is
$$ e^{n \frac{1}{2}\log(2\pi e \sigma^2)} e^{-\beta n \log(2)}$$ should go to 0 when $n$ tends to infinity.
This shows that the best (smallest) compression rate $\beta$ for such a signal is
$\beta_c=\frac 1 2\log(2 \pi e  \sigma^2)/\log(2)$. This is sharp and generalizes to all sources with a well
defined entropy rate. This is formalized in the source coding theorem.

In our case, we have no structure in the signal, which corresponds to letting $\sigma^2$ tend to $\infty$.
The unconstrained setting developed in the present paper can hence be seen as an {\em analogue of the Poltyrev
regime} for source coding. In addition, we focus on a {\em specific coding scheme}
which is that of Poisson hyperplanes one-bit compression.

Before going down this path, let us discuss some questions related to
coding in this one-bit compressive setting. (1) What is the codebook? A first natural answer consists in
associating one codeword sampled at random to each cell, with the uniform sampling taking place in
a conditionally independent way given the hyperplane tessellation. Another possibility is the center
of the smallest ball containing the zero cell (the out-ball). A third one is the center of the largest ball contained
in the zero cell (the in-ball). (2) What is the decoding algorithm? By this, we mean the way to retrieve the codeword,
as defined above, from the sequence of bits characterizing the cell as described in Section \ref{s:commentCS}.

For unconstrained one-bit data compression, the analogue of the Shannon threshold $\beta_c$ 
is the density $\gamma_n=\rho n^{\alpha}$ of hyperplanes that separates the situations where the 
mean volume of the typical cell tends to 0 and infinity, respectively.
As shown above, this critical density lies in the Shannon regime, namely for $\alpha=1$. More
precisely, if $\gamma_n=\rho n$, with $\rho< \rho_c=\frac 1 {\sqrt{e}}$, then this mean volume tends to infinity,
whereas if $\rho> \rho_c$, then it tends to 0. In other words, for one-bit compressive sensing based
on Poisson isotropic hyperplanes, the Palm-Shannon-Poltyrev source coding rate is $\alpha_c=1$ and $\rho_c=\frac 1 {\sqrt{e}}$.
The proposed name comes from the fact that one looks at the typical cell, with typicality
defined in the Palm sense (e.g., with respect to the point process of centers of the out-balls).
The threshold that separates the situations where the 
mean volume of zero cell tends to 0 and infinity, respectively, could be called the Feller-Shannon-Poltyrev
threshold and is obtained for a density of hyperplanes with $\alpha_c=1$ and $\rho_c=\frac {\pi}{\sqrt{e}}$.
The proposed name comes from ``Feller's paradox" which states that the interval of a stationary point process
on $\R$ containing the origin is larger than the typical interval.
The Feller-Shannon-Poltyrev rate is of the same order
as the Palm-Shannon-Poltyrev one, but $\pi$ times larger.

\subsubsection{Lossy One-bit Compression Source Coding}
In the classical lossy source coding case, one looks for a codebook 
such that the distortion between a signal and its encoding be less than or equal to $D$.
The most common distortion constraint is that the signal be at Euclidean distance order
less than or equal to $\sqrt{n D}$ from the sequence it is encoded by. The rate-distortion function
then specifies what is the best coding rate ensuring this constraint.

The framework discussed in the present paper can be seen as some Poltyrev version of lossy source coding with codebooks corresponding to one-bit data compression. 
As for the loss-less case, the first dichotomy is whether one takes the Palm viewpoint of the typical codeword 
or the Feller viewpoint of the typical data point. The cell of the former is $Z$,
whereas that containing the latter is $Z_0$.
Let us first discuss the equivalent of the classical distortion defined above in the Palm case.
If the codewords are the centers of the out-balls, then a natural definition of Palm distortion is in terms
of the radius of the out-ball of the typical cell. For instance, in this case, the rate-distortion function 
would give the smallest intensity of hyperplanes $\gamma_n=\rho n^\alpha$ such that this radius is less 
than or equal to $\sqrt{n} R$, as a function of $R$. This Palm-Shannon-Poltyrev out-ball rate-distortion
function is not known to the best of our knowledge. 
However, the Feller version of this problem is precisely solved by Theorems \ref{t:main_poi} and \ref{t:verts}.
For instance, in the case of Theorem \ref{t:verts}, the parameters in question are $\alpha=1$ and
$\rho_u(R)= x_u\frac{\sqrt{\pi}}{R\sqrt{2}},$
with $x_u$ the constant defined in the proof of the theorem. Hence the function $R\to n\rho_u(R)$
can be seen as the {\em {rate-distortion function}} for this version of the problem.
Note that for this definition of distortion,
lossy coding with a radius $R$ large enough requires a smaller hyperplane intensity
than that guaranteeing the Palm volume to go to zero (which can be seen as an analogue of loss-less coding):
the exponent is the same, namely $\alpha=1$, but the multiplicative constant $\rho(u)$ goes
to 0 as $R$ tends to infinity. As expected, relaxing the distortion constraint allows one to use smaller codes.

The paper also determines various other rate-separation functions of the Feller type.
A first instance is the Feller-Shannon-Poltyrev
in-ball function, which gives the smallest hyperplane intensity such that the closest data point not encoded
in the same way as the origin lies at a distance at least $\delta$. This last condition is equivalent to having the
radius of the largest ball centered at the origin and contained in the zero cell being larger than or equal to
$\delta$. By the same arguments as in Proposition \ref{prop:3.1}, the associated threshold is 
$\alpha_c=0$. If $\gamma_n = \rho$, the probability that this distance is at least $\delta$ is $\exp(-2\rho \delta)$.
A second example is the Feller-Shannon-Poltyrev linear contact function, which gives the smallest hyperplane
intensity such that the closest data point in some random direction and not encoded as the origin is at distance more
than $\sqrt{n D}$. By the arguments of Proposition \ref{prop:3.2}, the threshold is again $\alpha_c=0$ and 
if $\gamma_n = \rho$, the probability that this distance is at least $\sqrt{n D}$ is $\exp(-\frac{\sqrt{2}}{\sqrt{\pi}}\rho D)$.

\subsection{Why Isotropic Poisson Hyperplanes}
\label{sec:IPH}
We discuss here some mathematical reasons justifying the framework proposed here for
a one-bit compression based on Poisson isotropic hyperplanes.
Other natural options in the Poisson hyperplane framework are Poisson Manhattan hyperplanes,
where all hyperplanes are orthogonal to the orthonormal basis of $\mathds R^n$.
An even simpler hyperplane system is the square one (referred to as the deterministic grid below).
The following tables summarize the results available on
basic quantities related to these tessellations, when the distance to the nearest hyperplane is the same in expectation.
The results are proved at the end of the section.

 \def\arraystretch{2}
 \begin{table}[h]
\centering
 \caption{Comparison of quantities for different tessellations with intensity $\gamma$ in $\R^n$.}
  \begin{tabular}{ l | c | c | c    }
    %\hline
    Type of tessellation & $\mathds{E}[V(Z_0)]$ & $\mathds{E}[V(Z)]$ & $\mathds{P}(x \notin Z_0)$ \\ \hline
    Deterministic Grid &  $\left(\frac{2n}{\gamma}\right)^n$ & $\left(\frac{2n}{\gamma}\right)^n$ & $1_{\{\|x\|_{\infty} \geq \frac{n}{\gamma}\}}$  \\ %\hline
   Poisson Manhattan  &$ \left(\frac{2n}{\gamma}\right)^n$ & $\frac{1}{\kappa_n}\left(\frac{n \kappa_n}{\gamma \kappa_{n-1}}\right)^n$ & $ 1-\exp(- \frac{\gamma}{n} \|x\|_1)$  \\ %\hline
    Poisson Isotropic  & $n! \kappa_n\left(\frac{n \kappa_n}{2\gamma\kappa_{n-1}}\right)^n$ & $\frac{1}{\kappa_n}\left(\frac{n \kappa_n}{\gamma \kappa_{n-1}}\right)^n$ & $ 1-\exp\left(- \frac{2\gamma\kappa_{n-1}}{n \kappa_n} |x|\right)$  \\ 
   % \hline
  \end{tabular}
  \label{t:summary}
\end{table}

For all criteria in Table \ref{t:summary}, the Poisson isotropic setting outperforms the two other options.
For the expected volume of the zero cell (first column),
the isotropic Poisson tessellation is the best, i.e., has the smallest expected volume.
This fact is the main justification of the use of this Poisson isotropic structure in the context of one-bit compression: this allows the code with the smallest volume of data encoded as the typical data, among all three options.
The Poisson isotropic setting is also better than the other two in terms of the probability of separation of the typical data from data point $x$. We see from the last column that isotropic Poisson hyperplanes outperforms the other two options
orderwise: the thresholds for the latter have order $\alpha=1$, whereas that of the former
has order $\alpha=1/2$ only.

In contrast, consider now a uniform random vector $Y$ chosen in the zero cell
and take as a distortion criterion the ``norm" of $Y$, defined as
$\mathds{E}[|Y|^2]^{\frac{1}{2}}$.
The deterministic grid has the smallest norm and the Poisson grid has the second smallest norm. From Proposition 4.1 in \cite{Oreilly}, the isotropic Poisson tessellation gives an upper bound of this norm, where the upper bound is larger than the other two cases. For the quantity $R_M$, or equivalently, the furthest vertex of the zero cell from the origin, the results are the same, with the deterministic grid performing better than the Poisson grid, and the isotropic Poisson tessellation having an upper bound greater than the other two cases, since $x_{u} \approx 3$. % ordering of closest the furthest is the same. 
For both quantities to be small, the scaling with dimension $n$ needed for $\gamma$ is $n^{3/2}$ for all three tessellations.

 \def\arraystretch{2}
 \begin{table}[h]
\centering
 \caption{Comparison of quantities for different tessellations with intensity $\gamma$ in $\R^n$.}
  \begin{tabular}{ l | c | c   }
    Type of tessellation & $\mathds{E}[|Y|^2]^{\frac{1}{2}}$ & $R_M$  \\ \hline
    Deterministic Grid & $\frac{n^{3/2}}{\sqrt{3}\gamma}$  & $\frac{n^{3/2}}{\gamma}$   \\ %\hline
   Poisson Manhattan & $  \frac{n^{3/2}}{\gamma}$  & $\frac{\sqrt{7}n^{\frac{3}{2}}}{\sqrt{2}\gamma}$   \\ %\hline
    Poisson Isotropic &$\lesssim \frac{\sqrt{\pi}n^{3/2}}{\sqrt{2}\gamma}$  & $\lesssim x_{u}\frac{\sqrt{\pi}n^{3/2}}{\sqrt{2} \gamma}$ \\ 
   % \hline
  \end{tabular}
  \label{t:summary2}
\end{table}

We now give the proofs.

To compute the norm of the uniform random vector in the zero cell of the deterministic grid, consider the fixed cube of width $\frac{2n}{\gamma}$. 
Let $Y_n \sim \mathrm{Uniform}\left(\left[-{n}/{\gamma}, {n}/{\gamma} \right]^n \right)$. Then, by the strong law of large numbers,
\begin{align*}
\frac{|Y_n|^2}{n} = \frac{\sum_{k=1}^n Y_{n,k}^2}{n} \to \mathds{E}[Y_{n,1}^2],
\end{align*}
as $n \to \infty$. Then, since $Y_{n,1} \sim \mathrm{Uniform}([-n/\gamma, n/\gamma])$,
\[\mathds{E}[Y_{n,1}^2] = \frac{1}{3}\left(\frac{n^2}{\gamma^2}- \frac{n^2}{\gamma^2} + \frac{n^2}{\gamma^2}\right) = \frac{n^2}{3\gamma^2}.\] 
Thus, $|Y_n| \sim \frac{n^{3/2}}{\sqrt{3}\gamma}$, as $n \to \infty$. The other quantities are immediate.

The Poisson Manhattan tessellation is defined as follows. Let $X$ be a Poisson hyperplane tessellation in $\R^n$ with intensity $\gamma$ and directional distribution $\phi$ that has mass $\frac{1}{2n}$ on each positive and negative axis, i.e. the normal vectors of the hyperplanes are the usual basis directions $\pm e_1, ..., \pm e_n$. Since equal weight is placed on each direction, the normal vectors of the hyperplanes form independent Poisson point processes of intensity $\frac{\gamma}{n}$ on each axis. 

For each $i = 1, \ldots n$, let $N_i = \{T_k^i\}$ be the Poisson point process of intersection points on the $\pm e_i$ axis with the usual convention that $T_0^i \leq 0 < T_1^i$. Then, the zero cell $Z_0$ of $X$ is defined as
\begin{align*}
Z_0 = \prod_{i=1}^n [T_0^i, T_1^i].
\end{align*}
Note that the interval $[T_0^i, T_1^i]$ will not have an exponential distribution, since we are requiring that 0 is in the interval, biasing for larger intervals. We obtain the distribution of the length of the interval by using the Palm distributions of $\{N_i\}_{i=1}^n$. By Slivnyak's theorem, $\mathds{P}_{N} = \mathds{P}^0_{N - \delta_0}$, so the distribution of length of the interval is the same as 
\begin{align*}
\mathds{P}(T_1^i - T_0^i \in A) = \mathds{P}^0(T_1^i + |T_{-1}^i| \in A).
\end{align*}
Under $\mathds{P}^0$, i.e. conditioned on $T_0 = 0$, $T_1$ and $|T_{-1}|$ are independent exponential random variables with parameter $\frac{\gamma}{n}$. Then, we first see that
\begin{equation*}
\E(V_n(Z_0)) = \prod_{i=1}^n \E(T_1^i - T_0^i) = \E^0(T_1^1 + |T_{-1}^1|)^n = \left(\frac{2\gamma}{n}\right)^n.
\end{equation*}
Also, for $Y$ such that conditioned on $X$, $Y \sim \mathrm{Uniform}(Z_0)$, the law of large numbers implies that as $n \rightarrow \infty$,
\begin{align*}
\frac{|Y|^2}{n} = \frac{\sum_{i=1}^n Y_i^2}{n} \rightarrow \mathds{E}[Y_1^2] \text{  a.s.}
\end{align*}
Using the fact that $(Y_i | T_0^i, T_1^i) \sim \mathrm{Uniform}([T_0^i, T_1^i])$, we have
\begin{align*}
\mathds{E}[Y_i^2] &= \mathds{E}[ \mathds{E}[Y_i^2 | T_0^i, T_1^i]] = \mathds{E}\left[ \frac{T_0^2 +  T_0T_1 + T_1^2}{3}\right] = \frac{1}{3}\left(\mathds{E}T_0^2 - \mathds{E}^0[|T_{-1}|]\mathds{E}^0[T_1] + \mathds{E}T_1^2\right) \\
&= \frac{1}{3}\left(\frac{2n^2}{\gamma^2} - \frac{n^2}{\gamma^2} + \frac{2n^2}{\gamma^2}\right) = \frac{n^2}{\gamma^2}.
\end{align*}
Thus, $|Y_n|^2 \sim \frac{n^{3/2}}{\gamma}$ as $n \to \infty$. %For $\gamma \sim \beta n$, the distance of a uniformly chosen point in the zero cell from the origin is close to $\frac{1}{\beta}\sqrt{n}$ with high probability for large $n$.

%For the isotropic Poisson hyperplane tessellation, we have the following upper bound for the expected value of $|Y|^2$:
%\begin{align*}
%\mathds{E}(|Y|^2)^{\frac{1}{2}} \lesssim \left(\frac{\Gamma(n + 3)}{\Gamma(n+1)}\right)^{\frac{1}{2}} \frac{1}{\gamma} \left(\frac{n \kappa_n}{2\kappa_{n-1}}\right) \sim \frac{\sqrt{\pi}n^{3/2}}{\sqrt{2}\gamma} .
%\end{align*}

For the Poisson Manhattan, the quantity $R_M$ is given by
\begin{align*}
R_M^2 = |(\max\{T_1^1, |T_0^1|\}, ..., \max \{T_1^n, |T_0^n|\}|^2 = \sum_{i=1}^n (\max\{T_1^n, |T_0^n|\}^2).
\end{align*}
By the law of large numbers, as $n \rightarrow \infty$,
\begin{align*}
\frac{R_M^2}{n} \rightarrow \mathds{E}[\max\{T_1^n, |T_0^n|\}^2], \text{  a.s.} 
\end{align*}
The distribution of $\max\{T_1, T_0\}$ is
\begin{align*}
\mathds{P}(\max\{T_1, T_0\} \leq x) = \mathds{P}^0(\max\{T_1, |T_{-1}|\} \leq x) = (1 - e^{-\frac{\gamma}{n}x})^2.
\end{align*}
Then, using integration by parts,
\begin{align*}
\mathds{E}[\max\{T_1^n, |T_0^n|\}^2] &= \int_0^{\infty} 2x \mathds{P}(\max\{T_1, |T_0|\} \geq x) dx = \int_0^{\infty} 2x(1 - (1-e^{-\frac{\gamma}{n}x})^2)dx \\
&= \int_0^{\infty} 2x(1 - (1- 2e^{-\frac{\gamma}{n}x} + e^{-\frac{2\gamma}{n}})) dx
 = \int_0^{\infty} 2x(2e^{-\frac{\gamma}{n}x} - e^{-\frac{2\gamma}{n}x}) dx \\
&=\frac{4n^2}{\gamma^2} - \frac{n^2}{2\gamma^2} = \frac{7n^2}{2\gamma^2}.
\end{align*}
Thus, $R_M$ is concentrated near $\frac{\sqrt{7}n^{3/2}}{\sqrt{2}\gamma}$ for large $n$.

\nocite{*}
\bibliographystyle{plain}
\bibliography{RefHyperplanes}

\end{document}